\newtheoremstyle{myremark}     {10pt}{10pt}{}{}{\bfseries}{.}{.5em}{}
\newtheorem{thm}{Theorem}[section]
\newtheorem{cor}[thm]{Corollary}
\newtheorem{lem}[thm]{Lemma}
\newtheorem{pro}[thm]{Proposition}
\theoremstyle{definition}
\newtheorem{defn}[thm]{Definition}
\newtheorem{exmp}[thm]{Example}
\theoremstyle{myremark}
\newtheorem{rem}[thm]{Remark}
\numberwithin{equation}{section}
\begin{document}


	\title[Covariance identities and Variance bounds]{Covariance identities and Variance bounds for Infinitely Divisible Random Variables and their Applications}

	\author[Barman]{Kalyan Barman}
	\address{\hskip-\parindent
		Kalyan Barman, Department of Mathematics, IIT Bombay,
		Powai - 400076, India.}
	
	\email{barmankalyan@math.iitb.ac.in}
	
	\author[Upadhye]{Neelesh S Upadhye}
	
	\address{\hskip-\parindent
		Neelesh S Upadhye, Department of Mathematics, IIT Madras,
		Chennai - 600036, India.}
	\email{neelesh@iitm.ac.in}
	
	\author[Vellaisamy]{Palaniappan Vellaisamy}
	\address{\hskip-\parindent
		P Vellaisamy, Department of Statistics and Applied Probability, UC Santa Barbara, Santa Barbara, CA, 93106, USA}
	\email{pvellais@ucsb.edu}
	\subjclass[2020]{60E05\textbf{; }60E07\textbf{; }62F15 \textbf{; }62P05.}
	
	\date{\textit{August 2, 2024}}
	\keywords{Covariance identity, Stein-type identity, Infinitely divisible distributions, Variance bounds, Weighted premium calculation principles.}
	
	\begin{abstract}
		In this article, we establish a general covariance identity for infinitely divisible distributions ($IDD$). Using this result, we derive Cacoullos type variance bounds for the $IDD$. Applications to some important distributions are discussed, in addition to the computation of variance bounds for certain posterior distributions. As another application, we derive the Stein-type identity for the $IDD$, which involves the L\'evy measure. This result in turn is used to derive the Stein-type identity for the $CGMY$ distributions and the variance-gamma distributions ($VGD$). This approach, especially for the $VGD$ is new and simpler, compared to the ones available in the literature. Finally, as another nontrivial application, we apply the covariance identity in deriving known and some new formulas for the weighted premium calculation principles ($WPCP$) and Gini coefficient for the $IDD$.
	\end{abstract}

	\maketitle
	
\section{Introduction} \label{Intro}	

\noindent
In 1972, Charles Stein, while attempting a new proof of central limit theorem, invented a new identity for normal distribution and used it to study the normal approximation problems for the sums of dependent random variables (rvs) (see, Stein \cite{k2}). The invention of similar identities for other probability distributions is well studied in the literature, with applications to limit theorems \cite{barman96,ce0,novak,k1}, runs \cite{k7,vellai1}, estimation theory \cite{BR,Zhu}, functional inequalities \cite{barman1}, insurance \cite{geoP} and various other fields. 

\vskip 2ex
\noindent
 In particular, Stein \cite{k2} proved that for a normal random variable (rv) $X\sim \mathcal{N}(\mu, \sigma^{2})$, 
\begin{align}
	\text{Cov}(X,g(X))=\sigma^{2}\mathbb{E}[g^{\prime}(X)], \label{covidentity1}
\end{align}
\noindent
where $g$ is an absolutely continuous function with $\mathbb{E}[g^{\prime}(X)] < \infty$. 

\vskip 2ex
\noindent
Goldstein and Reinert \cite{goldren0} generalized the Stein identity in \eqref{covidentity1} as follows. Let $X$ be a real-valued rv with mean zero and variance $\sigma^{2}$. Then, for any differentiable function $g$ with $\mathbb{E}(Xg(X))<\infty$, there exists a rv $X^{*}$ having the unimodal probability density function ($pdf$)
\begin{align*}
	f_{X^*}(x)=\frac{1}{\sigma^{2}}\mathbb{E} \left(X \mathbf{I}(X>x) \right)
\end{align*}
\noindent
such that
\begin{align}\label{reinertcovid}
	Cov (X,g(X))=\sigma^{2}\mathbb{E}\left( g^{\prime} (X^{*})\right).
\end{align}
\noindent
We note that, from \cite[Lemma 2.1(ii)]{goldren0}, $X^*$ is supported on the closed convex hull of the support of $X$. Also, they applied the covariance identity \eqref{reinertcovid} to obtain the rate of convergence in the central limit theorem. Further, they presented a nice application to dependent samples.

\noindent
Later, Papadatos and Papathanasiou \cite{papa} established a covariance identity, similar to \eqref{reinertcovid}, for an absolutely continuous rv $X$ given by
\begin{align}\label{covidentity2}
\text{Cov}(X,g(X))=\sigma^{2} \mathbb{E}[g^{\prime}(X^{*})],
\end{align}
\noindent
where the rv $X^*$ has the unimodal density
\begin{align}\label{papapdf}
f_{X^{*}}(x)=\frac{1}{\sigma^{2}} \int_{x}^{\infty}[t-\mathbb{E}(X)]f_{X}(t)dt.	
\end{align}

\noindent
From \eqref{covidentity2}, it is clear that, if $X\sim \mathcal{N}(\mu, \sigma^{2})$, then $X^{*}\sim \mathcal{N}(\mu, \sigma^{2})$. Observe that the Stein-type covariance identity \eqref{covidentity2} depends on the $pdf$ of $X$. The derivation of the identity becomes difficult, whenever $f_X(x)$ is not in the closed form. 

\vskip 2ex
\noindent
Let $X$ follow the infinitely divisible distribution ($IDD$), denoted by $IDD(\mu,\sigma^2,\nu)$, with parameters $\mu,\sigma^2$ and the associated L\'evy measure $\nu$ (see \eqref{nkey1}). It is known that the $pdf$ of some distributions belonging to $IDD$ family may not be available in a closed form. Therefore, the derivation of Stein-type covariance identity is not possible for all the distributions belonging to $IDD$ family using the approach given in \cite{papa}. In 1998, Houdr\'e {\it et al.} \cite{covrep} established a covariance representation for $IDD$ via the generator approach. They applied it to study the association problems for $IDD$ and also correlation inequalities. Recently, Arras and Houdr\'e \cite{arras0} obtained a Stein-type covariance identity, like \eqref{covidentity1} for $IDD$ with first finite moment using covariance representation given in \cite[Proposition 2]{covrep}. More recently, Upadhye and Barman \cite[Theorem 3.1]{k1} established a Stein-type identity for $IDD$ via the characteristic function (cf) approach. 

\vskip 2ex
\noindent
	In this article, we first obtain an extended Stein-type covariance identity in Theorem \ref{qtheorem1} for the $IDD$. The identity for the positive $IDD$ is illustrated for the first time, in terms of cumulants, to the best of our knowledge. Although covariance identities for $IDD$ are well studied (see, \cite{houdrecovrep} and \cite{covrep}), the covariance identity in Theorem \ref{qtheorem1} is useful. Using this result, we establish Cacoullos type variance bounds which are used to compute variance bounds of the parameters of the posterior distributions. Also, we obtain Stein identity for the variance-gamma distributions ($VGD$) using covariance identity and the associated L\'evy measure. Our approach is new and much simpler. We finally apply the covariance identity in deriving some new formulas of weighted premium calculation principles ($WPCP$), that is, for $\mathbb{E}[Xw(X)]\big/\mathbb{E}[w(X)]$, in terms of L\'evy measure. Also, the Gini coefficient for $IDD$ is derived. Observe that the density of the $IDD$ is not usually available in explicit form, and in such case the approach based on the L\'evy measure is quite useful. Several well chosen examples illustrate our methodology, giving simpler derivation of the known results as well as some new formulas.
	

\vskip 2ex	
\noindent
The article is organized as follows. In Section \ref{pre}, we give the L\'evy-Khintchine representation of $IDD(\mu,\sigma^{2},\nu)$. Note that the class of $IDD(\mu,\sigma^{2},\nu)$ is quite large and it includes many subclasses of distributions, such as compound Poisson distributions ($CPD$) and generalized tempered stable distributions ($GTSD$). Also, we discuss rather in detail some special cases of the $GTSD$, which include variance-gamma distributions ($VGD$), bilateral-gamma distributions ($BGD$) and Carr, Geman, Madan and Yor ($CGMY$) distributions. In Section \ref{mainr}, we derive an interesting covariance identity for $IDD(\mu,0,\nu)$ (see Theorem \ref{qtheorem1}), which depends on the L\'evy measure $\nu$. Using this result, we obtain new covariance identities for $GTSD$ and $VGD$. In Section \ref{vbbd}, using our covariance identity, given in Section \ref{mainr}, we derive Cacoullos-type variance bounds for $g(X)$, where $X\sim IDD(\mu,0,\nu)$. Applications to gamma, Laplace and two-sided exponential distributions are considered. Also, as another application to the Bayesian inference, we find variance bounds for the posterior distributions. In Section \ref{appofre}, we consider two important applications of our results. The first one is concerned with obtaining Stein identities via the covariance identity. We consider the $VGD$ and derive the Stein identity using the covariance identity and the L\'evy measure of the $VGD$. It can be seen that our approach involves much simpler calculations, compared to the one used in \cite{k24} where the density approach is used, and it involves lengthy calculations and modified Bessel functions. Our second application is the calculation of $WPCP$. First, we obtain a result for the $WPCP$, when $X\sim IDD(\mu,0,\nu)$. Using this result, we compute $WPCP$ for gamma, $CPD$, $BGD$, $VGD$, inverse Gaussian and $CGMY$ distributions. Finally, we compute also the Gini coefficient for $IDD(\mu,0,\nu)$ and its applications to $VGD$ and $CGMY$ distributions are discussed.

\section{Notations and Preliminary Results}\label{pre}
	\noindent
	 Let $\textbf{I}_{A}(.)$ denote the indicator function of the set $A$. Let $X$ be an infinitely divisible rv. Then its L\'evy-Khintchine representation of cf (see \cite{sato}) is given by 
	 \begin{equation}\label{nkey1}
			\phi_{X}(t)=\exp \left(it\mu_0  -\frac{\sigma^{2}t^{2}}{2}+\int_{\mathbb{R}}(e^{itu}-1-itu\mathbf{I}_{\{|u|\leq 1\}}(u))\nu(du)\right),~~t\in\mathbb{R}, 
		\end{equation}
		where $\mu_{0} \in \mathbb{R}$, $\sigma\geq 0$ and $\nu$, the L\'evy measure on $\mathbb{R}\setminus \{0\}$ satisfying $\int_{\mathbb{R}}(1 \wedge u^{2})\nu(du)<\infty.$ 	
	\noindent
	Observe that if $\int_{\{|u|>1\}}u\nu(du) < \infty,$ then \eqref{nkey1} can be written as 	
	\begin{align}\label{qchfgtsd}
		\phi_{X}(t)&=\exp \left(it\mu -\frac{\sigma^{2}t^{2}}{2}+\int_{\mathbb{R}}\left(e^{itu} -1-itu\right)\nu(du)\right) ,~~t\in\mathbb{R},
	\end{align}
	\noindent
	where $\mu=\mu_{0}+\int_{\{|u|>1\}}u\nu(du)$. For a rv $X$ with cf \eqref{qchfgtsd}, we write $X\sim IDD(\mu, \sigma^{2}, \nu)$. 
	
	\vskip 2ex
	\noindent
	If $X \sim IDD(\mu,\sigma^{2},\nu)$, then its $n$-th cumulant is
	
	\vfill
	
	\begin{align}
		C_{n}(X):=(-i)^{n}\frac{d^{n}}{dt^{n}}\log\phi_{X}(t)\bigg|_{t=0},~n\geq 1. 
	\end{align}

	\noindent
	 Let $X\sim IDD(\mu,\sigma^{2},\nu)$ have moments of arbitrary order. Then
	\begin{align}
C_{1}(X)&=\mathbb{E}(X)=\mu ,\\
\label{cumul}
C_{2}(X)&=Var(X)=
\sigma^{2}+
\int_{\mathbb{R}}u^{2}\nu(du), \text{ and }\\
C_{k}(X)&=\int_{\mathbb{R}}u^{k}\nu(du),~k\geq 3.	
\end{align}
	\noindent
	The class of $IDD$ is quite large, and see \cite{sato} for more properties. Next, we discuss some important subclasses of $IDD$, which are useful in later sections.
	\subsection{Compound Poisson distributions} 
	A rv $X$ is said to have $CPD$ if its cf (see \cite{sato}) is given by
	\begin{equation}\label{cfcp}
	\phi_{cp}(t)=\exp\left(\nu(\mathbb{R})\int_{\mathbb{R}}(e^{itu}-1)\nu_0(du)  \right),~~t \in \mathbb{R},
	\end{equation}
	where the L\'evy measure $\nu$ is finite i.e., $\nu(\mathbb{R})<\infty$ and $\nu_0$ is the Borel probability measure on $\mathbb{R}$, defined by $\nu_0(du)=\frac{\nu(du)}{\nu(\mathbb{R})}$. We denote it by $CPD(\nu(\mathbb{R}),\nu_0)$. Let $\overset{d}{=}$ denotes equality in distribution. Note that $CPD(\nu(\mathbb{R}),\nu_0)\overset{d}{=}IDD(\mu,0,\nu),$ where $\mu=\int_{\mathbb{R}}u\nu(du)$. Let $\delta_{1}$ be the Dirac measure concentrated at $1$. When $\lambda =\nu (\mathbb{R})$ and $\nu_0(du)=\delta_1(du)$, then $CPD(\lambda,\lambda \delta_{1})$ has Poisson distribution with mean $\lambda$, denoted by $Poi(\lambda)$.

\vskip 1ex
	\noindent
	Next, we introduce $GTSD$ and discuss some of their relevant properties (see \cite{cont}).
	\subsection{Generalized tempered stable distributions}
	A rv $X$ is said to have $GTSD$ if its cf (see \cite[Section 4.5]{cont}) is given by
	\begin{align}\label{nep5}
		\phi_{\text{gts}}(t)
		&=\exp \left(it\mu +\int_{\mathbb{R}}\left(e^{itu} -1-itu\right)\nu_{gts}(du)\right) ,~~t\in\mathbb{R},
		\end{align}		
	\noindent	
	where the L\'evy measure $\nu_{gts}$ is

	\begin{equation}\label{nep6}
		\nu_{gts}(du)=\left(\frac{\alpha^{+}  }{u^{1+\beta}}e^{-\lambda^{+}u}\mathbf{I}_{(0,\infty)}(u)+\frac{\alpha^{-}  }{|u|^{1+\beta}}e^{-\lambda^{-}|u|}\mathbf{I}_{(-\infty,0)}(u)\right)du,
	\end{equation}
\noindent
and the parameters $\mu \in \mathbb{R}$, $\alpha^{+}, \lambda^{+},\alpha^{-},\lambda^{-} \in (0,\infty),$ and $\beta \in [0,2).$ We denote it by

\noindent
 $GTSD(\mu,\beta,\alpha^{+},$ $\lambda^{+},\alpha^{-},\lambda^{-})$. Note that $GTSD(\mu,\beta,\alpha^{+},\lambda^{+},\alpha^{-},\lambda^{-}) \overset{d}{=}IDD(\mu,0,\nu_{gts})$. 

\vskip 1ex
\noindent
Next, we list some special distributions of the $GTSD$ family.  
\subsubsection{Bilateral-gamma distributions} A rv $X$ is said to have $BGD$ if its cf (see \cite{kuchtapbiden}) is given by
	\begin{equation}\label{bgdcf}
	\phi_{bg}(t)=\exp\left\{\int_{\mathbb{R}}(e^{itu}-1)\nu_{bg}(du) \right\},~~t \in \mathbb{R},
	\end{equation}
	\noindent
	where the L\'evy measure $\nu_{bg}$ is

	\begin{equation}\label{bgdlevy}
	\nu_{bg}(du)=\left(\frac{\alpha^{+}  }{u}e^{-\lambda^{+}u}\mathbf{I}_{(0,\infty)}(u)+\frac{\alpha^{-}  }{|u|}e^{-\lambda^{-}|u|}\mathbf{I}_{(-\infty,0)}(u)\right)du,
	\end{equation}
	\noindent
	and the parameters $\alpha^{+},\lambda^{+},\alpha^{-}, \lambda^{+} \in (0,\infty).$ We denote it by $BGD(\alpha^{+},\lambda^{+},\alpha^{-},$ $\lambda^{-})$. Note that $BGD(1,\lambda^{+},1,$ $\lambda^{-})$ has two-sided exponential distribution (see \cite{sato}). Note also that $BGD(\alpha^{+},\lambda^{+},$ $\alpha^{-},\lambda^{-}) \overset{d}{=}GTSD(\mu,0,\alpha^{+},\lambda^{+},\alpha^{-},\lambda^{-})$, where $\mu=\int_{\mathbb{R}}u\nu_{bg}(du).$

	\noindent
	Let now $X_1$ and $X_2$ be two independent gamma $Ga(\alpha^+,\lambda^+)$ and $Ga(\alpha^-,\lambda^-)$ rvs, where $Ga(\alpha,\lambda)$ has the density
	\begin{align*}
		f(x)=\frac{\lambda^{\alpha}}{\Gamma(\alpha)}x^{\alpha-1}e^{-\lambda x},~x>0. 
	\end{align*}
	\noindent
	 Then  $X_1-X_2 \sim BGD(\alpha^{+},\lambda^{+},\alpha^{-},\lambda^{-}).$ It is known that the $pdf$ of $BGD$ is symmetric about the origin and its symmetric density \cite[Section 3]{kuchtapbiden} is given by
	\begin{align}
		f_{X}(x)=\frac{(\lambda^{+})^{\alpha^{+}} (\lambda^{-})^{\alpha^{-}}}{(\lambda^{+}+\lambda^{-})^{\alpha^{-}}\Gamma (\alpha^{+})\Gamma (\alpha^{-}) } \displaystyle\int_{0}^{\infty}v^{\alpha^{-} -1} \left( x+ \frac{v}{\lambda^{+}+\lambda^{-}} \right)^{\alpha^{+} -1} e^{-v} dv,
	\end{align} 
	\noindent
	$x\in \mathbb{R}\setminus \{ 0\}$. We refer the reader to \cite{bilateral} for more properties of $BGD$.
	\subsubsection{Variance-gamma distributions} A rv $X$ is said to have $VGD$ if its cf (see \cite{kk1}) is given by
	\begin{align}
	\label{eqvg}\phi_{vg}(t)&=e^{it\mu_0}\left(1-it\left(\frac{1}{\lambda^{+}}-\frac{1}{\lambda^{-}}\right)+\frac{t^{2}}{\lambda^{+}\lambda^{-}}  \right)^{-\alpha} \\
	\label{cfvgd1}&=\exp\left\{it\mu_0+ \int_{\mathbb{R}}(e^{itu}-1)\nu_{vg}(du) \right\},~~t\in \mathbb{R},
	\end{align}
	\noindent
	where the L\'evy measure $\nu_{vg}$ is
	\begin{equation}\label{levyvgd}
	\nu_{vg}(du)=\left(\frac{\alpha  }{u}e^{-\lambda^{+}u}\mathbf{I}_{(0,\infty)}(u)+\frac{\alpha  }{|u|}e^{-\lambda^{-}|u|}\mathbf{I}_{(-\infty,0)}(u)\right)du,
	\end{equation}
	\noindent
	and the parameters $\mu_0\in \mathbb{R}$, $\alpha,\lambda^{+},\lambda^{-} \in (0,\infty).$ We denote it by $VGD_0(\mu_{0},\alpha,\lambda^{+},$ $\lambda^{-})$.

	\noindent
	Note that $VGD_0(\mu_{0},\alpha,\lambda^{+},\lambda^{-}) \overset{d}{=}GTSD(\mu,0,\alpha,\lambda^{+},\alpha,\lambda^{-})$, where $\mu-\int_{\mathbb{R}}u\nu_{vg}(du)$ $=\mu_0.$ Note also that $VGD_0(\mu_{0},1,\frac{1}{\delta},\frac{1}{\delta})$ has Laplace $La(\mu_{0},\delta^2)$ distribution. If we set $\frac{1}{\lambda^{+}\lambda^{-}}=\sigma^{2}, \left(\frac{1}{\lambda^{+}}-\frac{1}{\lambda^{-}}\right)=2\theta$ and $\alpha=\frac{r}{2}$ in (\ref{eqvg}), we get
	\begin{equation}\label{cfvg2}
	\phi_{vg}(t)=e^{it\mu_0}\left(1-i2\theta t+\sigma^{2}t^{2}\right)^{-\frac{r}{2}},
	\end{equation}
	where $\mu_0, \theta\in\mathbb{R}$ and $\sigma^{2},r \in (0,\infty)$, and it is denoted by $VGD_{1}(\mu_0,\sigma^{2},r,\theta)$ (see \cite[eqn. 1.10]{gammadiff}). It is known \cite[Section 1]{kk1} that the $pdf$ of $VGD_{1}(\mu_0,\sigma^{2},r,\theta)$ is given by 
		
		\begin{align}
			f_{X}(x)=\frac{1}{\sigma \sqrt{\pi} \Gamma (\frac{r}{2})}e^{\frac{\theta}{\sigma^{2}} (x-\mu_{0}) } \left( \frac{ |x-\mu_{0}|}{2 \sqrt{\theta^{2} +\sigma^{2} }}  \right)^{\frac{r-1}{2}}K_{\frac{r-1}{2}} \left( \frac{\sqrt{\theta^{2} +\sigma^{2} }}{\sigma^{2}} |x-\mu_{0}| \right),
		\end{align}
		\noindent
		where $x\in \mathbb{R},$ and $K_{\zeta}(x)$ is the modified Bessel function of the second kind, given by
		
		$$K_{\zeta}(x)=\frac{1}{2} \int_{0}^{\infty} z^{\zeta -1}e^{- \frac{x}{2}(z +\frac{1}{z})}dz.$$

			\subsubsection{CGMY distributions} A rv $X$ is said to have $CGMY$ distribution if its cf (see \cite{k13}) is given by
			\begin{equation}\label{CGMYcf}
			\phi_{cgmy}(t)=\exp\left\{\int_{\mathbb{R}}(e^{itu}-1)\nu_{cgmy}(du) \right\},~~t \in \mathbb{R},
			\end{equation}
			\noindent
			where the L\'evy measure $\nu_{cgmy}$ is

			\begin{equation}\label{CGMYlevy}
			\nu_{cgmy}(du)=\left(\frac{\alpha }{u^{1+\beta}}e^{-\lambda^{+}u}\mathbf{I}_{(0,\infty)}(u)+\frac{\alpha  }{|u|^{1+\beta}}e^{-\lambda^{-}|u|}\mathbf{I}_{(-\infty,0)}(u)\right)du,
			\end{equation}

			\noindent
			and the parameters $\alpha,\lambda^{+},\lambda^{-} \in (0,\infty)$ and $\beta \in [0,1)$. We denote it by $CGMY(\alpha,$ $ \beta,\lambda^{+},\lambda^{-}).$ Note that $CGMY(\alpha, \beta,\lambda^{+},\lambda^{-}) \overset{d}{=}GTSD(\mu,\beta,\alpha^{+},\lambda^{+},\alpha^{-})\overset{d}{=}IDD(\mu,0,\nu_{cgmy})$, where $\mu=\int_{\mathbb{R}}u\nu_{cgmy}(du)$. It is known \cite[Remark 7.11]{k13} that the $pdf$ of $CGMY$ distributions can not be expressed in closed form. We refer the reader to \cite{CGMY2} for more properties of $CGMY$ distributions.
		
		\section{A generalized Stein-type covariance identity}\label{mainr}
		\noindent
		In this section, we establish a covariance identity for infinitely divisible rvs. Before stating our result, let us define, for an integer $k\geq 1,$
		\begin{align}\label{qnnot1}
		\eta_{k}^{+}(u)=\int_{u}^{\infty}y^{k} \nu(dy), ~u>0,\text{ and }\eta_{k}^{-}(u)=-\int_{-\infty}^{u}y^{k} \nu(dy),~u<0,
		\end{align}
		\noindent
		where $\nu$ is the L\'evy measure of the $IDD$ (see \eqref{nkey1}). Let
		\begin{align}\label{qnnot0}
		\eta_{k} (u):= \eta_{k}^{+}(u)\mathbf{I}_{(0,\infty)}(u)+ \eta_{k}^{-}(u)\mathbf{I}_{(-\infty,0)}(u),~u\in \mathbb{R}.
		\end{align} 
		\noindent
		Let $X\sim IDD(\mu,0,\nu)$ with cf $\phi_{X}(t)$ given in \eqref{qchfgtsd}. Also let $(X_s,Y_s)$ be a infinitely divisible random vector with joint cf
		\begin{align}\label{jointcf}
		\phi_{s}(t,z)=\phi_X^{1-s}(t)\phi_X^{1-s}(z) \phi_X^s (t+z),	
		\end{align}
		\noindent
		for all $t,z \in \mathbb{R}$ and $s\in [0,1]$. It can be seen that $\phi_{X_s}(t)=\phi_{Y_s}(t)=\phi_{X}(t)$, so that $X_s$ and $Y_s$ are identically, but not independently, distributed.
 	
		\begin{thm}\label{qtheorem1}
			Let $X\sim\text{IDD}(\mu, 0, \nu)$ which has $\mathbb{E}(X^n) < \infty$, $n\in \mathbb{N}$, and the random vector $(X_s,Y_s)$ have the cf given in \eqref{jointcf}. Then 	
			\begin{equation}\label{qeqn1}
			\text{Cov}(X^{n},g(X))=\sum_{k=0}^{n-1}\binom{n}{k}\displaystyle\int_{0}^{1}\mathbb{E}\left(Y_s^{k}\displaystyle\int_{\mathbb{R}}g^{\prime}(X_s+v)\eta_{n-k}(v)dv\right)ds,
			\end{equation}
			where $g$ is an absolutely continuous function with $\mathbb{E}\left(Y_s^{k}\int_{\mathbb{R}}g^{\prime}(X_s+v)\eta_{n-k}(v)dv\right)<\infty$, for $0\leq k \leq (n-1)$.
		\end{thm}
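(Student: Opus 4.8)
The plan is to interpolate between perfect dependence and independence through the family $(X_s,Y_s)$, and to compute the $s$-derivative via the joint characteristic function. Define $\Psi(s):=\mathbb{E}[X_s^n g(Y_s)]$ for $s\in[0,1]$. Since \eqref{jointcf} gives $\phi_1(t,z)=\phi_X(t+z)$, which forces $X_1=Y_1=X$ almost surely, while $\phi_0(t,z)=\phi_X(t)\phi_X(z)$ makes $X_0,Y_0$ independent copies of $X$, we get $\Psi(1)=\mathbb{E}[X^ng(X)]$ and $\Psi(0)=\mathbb{E}[X^n]\mathbb{E}[g(X)]$, so that $\text{Cov}(X^n,g(X))=\Psi(1)-\Psi(0)=\int_0^1\Psi'(s)\,ds$. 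The whole theorem then reduces to identifying $\Psi'(s)$ with the summand in \eqref{qeqn1}.

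The key computational step is to differentiate the joint cf in $s$. Writing $L=\log\phi_X$, from \eqref{jointcf} one has $\partial_s\log\phi_s(t,z)=L(t+z)-L(t)-L(z)$, and in this difference the drift and compensator terms cancel while the $-1$'s combine, giving the clean identity $\partial_s\phi_s(t,z)=\phi_s(t,z)\int_{\mathbb{R}}(e^{itu}-1)(e^{izu}-1)\nu(du)$. Testing first against the Fourier mode $g(y)=e^{izy}$ and using $\mathbb{E}[X_s^m e^{izY_s}]=(-i)^m\partial_t^m\phi_s(t,z)|_{t=0}$, a Leibniz expansion of $\partial_t^n[\phi_s A]|_{t=0}$, where $A$ is the above integral and $A|_{t=0}=0$ kills the top term, yields $\frac{d}{ds}\mathbb{E}[X_s^n e^{izY_s}]=\sum_{k=0}^{n-1}\binom{n}{k}\big(\int u^{n-k}(e^{izu}-1)\nu(du)\big)\mathbb{E}[X_s^k e^{izY_s}]$ after all powers of $i$ collapse via $(-i)^n i^n=1$.

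Next I would identify the operator acting on $g$. For the mode $g(y)=e^{izy}$, the factor $\int u^m(e^{izu}-1)\nu(du)$ multiplying $e^{izy}$ is exactly $\int u^m\big(g(y+u)-g(y)\big)\nu(du)$; writing $g(y+u)-g(y)=\int_0^u g'(y+v)\,dv$ and applying Fubini separately on $\{u>0\}$ and $\{u<0\}$ converts this into $\int_{\mathbb{R}}g'(y+v)\eta_m(v)\,dv$ with $\eta_m$ as in \eqref{qnnot0}. Passing from a single exponential to a general absolutely continuous $g$ by linearity and Fourier inversion, and then invoking the symmetry $(X_s,Y_s)\overset{d}{=}(Y_s,X_s)$ inherited from the symmetry of $\phi_s$ in $(t,z)$, gives $\Psi'(s)=\sum_{k=0}^{n-1}\binom{n}{k}\mathbb{E}\big[Y_s^k\int_{\mathbb{R}}g'(X_s+v)\eta_{n-k}(v)\,dv\big]$; integrating in $s$ is precisely \eqref{qeqn1}.

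The main obstacle will be the rigorous justification rather than the algebra: one must legitimately differentiate $\Psi$ under the expectation in $s$ and, more delicately, pass from Fourier modes to a general $g$ meeting only the stated integrability hypothesis. Both rest on the moment assumption $\mathbb{E}(X^n)<\infty$, equivalent to $\int_{\{|u|>1\}}|u|^n\nu(du)<\infty$, which makes each $\eta_m$ (for $1\le m\le n$) finite; near the origin the integrand $u^m(g(y+u)-g(y))\sim g'(y)u^{m+1}$ is $\nu$-integrable precisely because $m=n-k\ge 1$, which is exactly why the sum terminates at $k=n-1$. Controlling the inversion for general $g$, or, alternatively, approximating $g$ by smooth compactly supported functions and using the hypothesis $\mathbb{E}\big(Y_s^k\int_{\mathbb{R}}g'(X_s+v)\eta_{n-k}(v)\,dv\big)<\infty$ to pass to the limit, is where the technical care is concentrated.
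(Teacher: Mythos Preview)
Your proposal is correct and rests on the same interpolating family $(X_s,Y_s)$, but the execution differs from the paper's in a material way. The paper does not differentiate $\Psi(s)$ via characteristic functions at all; instead it quotes the ready-made covariance representation of Houdr\'e, P\'erez-Abreu and Surgailis (Proposition~2 of \cite{covrep}),
\[
\text{Cov}(f(X),g(X))=\int_0^1 \mathbb{E}\int_{\mathbb{R}}\bigl(f(Y_s+u)-f(Y_s)\bigr)\bigl(g(X_s+u)-g(X_s)\bigr)\,\nu(du)\,ds,
\]
specializes $f(x)=x^n$, expands $(Y_s+u)^n-Y_s^n$ by the binomial theorem, and then performs exactly the Fubini step you describe to turn $\int u^{n-k}\bigl(g(X_s+u)-g(X_s)\bigr)\nu(du)$ into $\int_{\mathbb{R}} g'(X_s+v)\eta_{n-k}(v)\,dv$. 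So the Fubini/$\eta$-kernel manipulation is shared, but your route re-derives the cited lemma from scratch through $\partial_s\phi_s(t,z)=\phi_s(t,z)\int(e^{itu}-1)(e^{izu}-1)\nu(du)$ and a Leibniz expansion in $t$.

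What each buys: the paper's argument is short and avoids analysis entirely once the cited identity is accepted, and it already applies to general $f,g\in\mathcal{G}^1$, so no Fourier inversion or approximation is needed. Your argument is self-contained and makes the mechanism (interpolation plus the $s$-derivative of $\log\phi_s$) explicit, which is conceptually illuminating; the price is precisely the step you flag, namely upgrading the identity from $g(y)=e^{izy}$ to an arbitrary absolutely continuous $g$ satisfying only the stated integrability hypothesis. That passage is doable (e.g.\ by approximating $g$ with Schwartz functions and using dominated convergence with the finiteness of $\int_{|u|>1}|u|^n\nu(du)$), but it is genuine extra work that the paper sidesteps by citing \cite{covrep}.
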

		\begin{proof}
			Recall first (see \eqref{qchfgtsd}) that the cf of $X$ is 	
			\begin{align}\label{xchfgtsd}
			\phi_{X}(t)&=\exp \left(i t\mu+\int_{\mathbb{R}}\left(e^{itu} -1-itu\right)\nu(du)\right) ,~~t\in\mathbb{R}.
			\end{align}
			\noindent
			To prove our result, we use the covariance representation obtained in \cite[Proposition 2]{covrep} for infinitely divisible rvs. Let $ \mathcal{G}^{1}$ be the class of real-valued differentiable functions on $\mathbb{R}$ such that $f$ and $f^{\prime}$ are bounded on any bounded interval, the set of discontinuity points of $f^{\prime}$ has zero probability, and with $$\mathbb{E}\bigg(f^2(X)+\int_{\mathbb{R}} (f(X+u)-f(X)   )^{2} \nu (du)\bigg)<\infty.$$
			
			\noindent
			Then we have for $f,g \in \mathcal{G}^{1}$,	
			\begin{align}\label{newlab0}
			Cov(f(X),g(X))=\displaystyle\int_{0}^{1}\mathbb{E}\int_{\mathbb{R}} \bigg(f(Y_s +u)-f(Y_s) \bigg)\bigg( g(X_s +u) -g(X_s) \bigg)\nu(du)ds,
			\end{align} 
			\noindent
			where $(X_s, Y_s)$ has the cf given in \eqref{jointcf}. Let now $f(x)=x^{n}$ in \eqref{newlab0}, then we get
			\begin{align*}
			Cov(X^{n},g(X))=&\displaystyle\int_{0}^{1}\mathbb{E}\bigg( \displaystyle\int_{\mathbb{R}}\bigg( (Y_s+u)^{n} -Y_s^{n}   \bigg) \bigg(g(X_s+u)-g(X_s) \bigg)\nu(du)\bigg)ds\\
			=&\displaystyle\int_{0}^{1}\mathbb{E}\left(  \int_{\mathbb{R}}\left(\sum_{k=0}^{n-1} \binom{n}{k}Y_s^{k}u^{n-k} \right)\bigg(g(X_s+u)-g(X_s)\bigg)  \nu(du)    \right)ds\\
			=&\sum_{k=0}^{n-1} \binom{n}{k}\displaystyle\int_{0}^{1}\mathbb{E}\left(Y_s^{k}\displaystyle\int_{0}^{\infty}\bigg(g(X_s+u) -g(X_s)\bigg)u^{n-k}\nu (du)\right)ds\\
			&+\sum_{k=0}^{n-1} \binom{n}{k}\displaystyle\int_{0}^{1}\mathbb{E}\left(Y_s^{k}\displaystyle\int_{-\infty}^{0}\bigg(g(X_s+u) -g(X_s)\bigg)u^{n-k}\nu (du)\right)ds\\
			=&\sum_{k=0}^{n-1} \binom{n}{k}\displaystyle\int_{0}^{1}\mathbb{E}\left(Y_s^{k}\displaystyle\int_{0}^{\infty}\left( \int_{0}^{u}g^{\prime}(X_s+v)dv\right)u^{n-k}\nu (du)\right)ds\\
			&+\sum_{k=0}^{n-1} \binom{n}{k}\displaystyle\int_{0}^{1}\mathbb{E}\left(Y_s^{k}\displaystyle\int_{-\infty}^{0}\left( \int_{u}^{0}g^{\prime}(X_s+v)dv\right)(-u^{n-k})\nu (du)\right)ds\\
			=&\sum_{k=0}^{n-1}\binom{n}{k}\displaystyle\int_{0}^{1}\mathbb{E}\left(Y_s^{k}\displaystyle\int_{0}^{\infty}\bigg(g^{\prime}(X_s+v)\int_{v}^{\infty}u^{n-k}\nu(du)\bigg) dv\right)ds\\
			&+\sum_{k=0}^{n-1}\binom{n}{k}\displaystyle\int_{0}^{1}\mathbb{E}\left(Y_s^{k}\displaystyle\int_{-\infty}^{0}\bigg(g^{\prime}(X_s+v)\int_{-\infty}^{v}(-u^{n-k})\nu(du)\bigg) dv\right)ds\\
			=&\sum_{k=0}^{n-1}\binom{n}{k}\displaystyle\int_{0}^{1}\mathbb{E}\bigg(Y_s^{k}\displaystyle\int_{\mathbb{R}}g^{\prime}(X_s+v) \bigg(\eta _{n-k}^{+}(v)\mathbf{I}_{(0,\infty)}(v)\\
			&\quad\quad\quad+ \eta _{n-k}^{-}(v)\mathbf{I}_{(-\infty,0)}(v)  \bigg)dv\bigg)ds\\
			=&\sum_{k=0}^{n-1}\binom{n}{k}\displaystyle\int_{0}^{1}\mathbb{E}\left(Y_s^{k}\displaystyle\int_{\mathbb{R}}g^{\prime}(X_s+v)\eta_{n-k}(v)dv\right)ds,
			\end{align*}		
			\noindent
			which proves the result.
		\end{proof}
		\noindent
		We next show that, the Proposition 3.8 of \cite{arras0} follows for the case $n=1$.

		\vfill
		\begin{cor}\label{theorem1}
		Let $X\sim IDD(\mu,0,\nu)$ have moments upto second order and $Y_1$ (independent of $X$) has the density (see \cite[p.25]{arras0} and \eqref{cumul})
		\begin{align}\label{pdfzbias}
		f_{1}(y)=\frac{\eta_{1}(y)}{\int_{\mathbb{R}} y^{2} \nu (dy)}=\frac{\eta_{1}(y)}{Var(X)}, ~y\in\mathbb{R}.
		\end{align}
	 Then 	
	\begin{equation}\label{eqn1}
				\text{Cov}(X,g(X))=Var(X)\mathbb{E} \left(g^{\prime}(X+Y_1)\right),
			\end{equation}
			\noindent
			where $g$ is such that $\mathbb{E} \left(g^{\prime}(X+Y_1)\right)<\infty.$
		\end{cor}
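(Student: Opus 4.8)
The plan is to specialize Theorem \ref{qtheorem1} to $n=1$ and then recognize the resulting expression as an expectation against the density $f_1$. When $n=1$ the outer sum in \eqref{qeqn1} collapses to the single term $k=0$: since $\binom{1}{0}=1$ and $Y_s^{0}=1$, it reduces to
\begin{equation*}
\text{Cov}(X,g(X))=\int_{0}^{1}\mathbb{E}\left(\int_{\mathbb{R}} g^{\prime}(X_s+v)\,\eta_{1}(v)\,dv\right)ds.
\end{equation*}
The crucial feature of this case is that the integrand no longer involves $Y_s$, only $X_s$.

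Next I would invoke the fact, recorded just before the statement of Theorem \ref{qtheorem1}, that $\phi_{X_s}(t)=\phi_{X}(t)$, so that $X_s\overset{d}{=}X$ for every $s\in[0,1]$. Consequently $\mathbb{E}\big(\int_{\mathbb{R}} g^{\prime}(X_s+v)\eta_{1}(v)\,dv\big)=\mathbb{E}\big(\int_{\mathbb{R}} g^{\prime}(X+v)\eta_{1}(v)\,dv\big)$, an expression that does not depend on $s$; the integration over $[0,1]$ then contributes only a factor of $1$, leaving
\begin{equation*}
\text{Cov}(X,g(X))=\mathbb{E}\left(\int_{\mathbb{R}} g^{\prime}(X+v)\,\eta_{1}(v)\,dv\right).
\end{equation*}
Finally I would rewrite $\eta_{1}$ in terms of $f_{1}$: by \eqref{pdfzbias} and \eqref{cumul} (with $\sigma=0$) one has $\eta_{1}(v)=\text{Var}(X)\,f_{1}(v)$, so the inner integral equals $\text{Var}(X)\int_{\mathbb{R}} g^{\prime}(X+v)f_{1}(v)\,dv$. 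Since $Y_1$ is independent of $X$ with density $f_{1}$, a conditioning (Fubini) argument, justified by the hypothesis $\mathbb{E}(g^{\prime}(X+Y_1))<\infty$, identifies this with $\text{Var}(X)\,\mathbb{E}(g^{\prime}(X+Y_1))$, which is exactly \eqref{eqn1}.

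The steps above are routine; the one point that genuinely requires checking is that $f_{1}$ is a bona fide probability density, i.e. that $\int_{\mathbb{R}}\eta_{1}(y)\,dy=\text{Var}(X)$. I would verify this directly by Tonelli's theorem: on $(0,\infty)$, swapping the order of integration gives $\int_{0}^{\infty}\eta_{1}^{+}(y)\,dy=\int_{0}^{\infty}\!\int_{y}^{\infty} u\,\nu(du)\,dy=\int_{0}^{\infty} u^{2}\,\nu(du)$, and symmetrically $\int_{-\infty}^{0}\eta_{1}^{-}(y)\,dy=\int_{-\infty}^{0} u^{2}\,\nu(du)$; adding these and using $\sigma=0$ in \eqref{cumul} yields $\int_{\mathbb{R}}\eta_{1}(y)\,dy=\int_{\mathbb{R}} u^{2}\,\nu(du)=\text{Var}(X)$. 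This normalization check is the only mild obstacle, and it is a straightforward Fubini computation rather than a conceptual difficulty.
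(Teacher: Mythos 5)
Your proposal is correct and follows essentially the same route as the paper's proof: specialize Theorem \ref{qtheorem1} to $n=1$, use $X_s \overset{d}{=} X$ to remove the $s$-dependence so the $\int_0^1 ds$ contributes a factor of $1$, and then identify $\eta_1(v) = Var(X) f_1(v)$ together with the independence of $Y_1$ and $X$ to obtain \eqref{eqn1}. Your additional Tonelli verification that $\int_{\mathbb{R}}\eta_1(y)\,dy = Var(X)$ (so that $f_1$ is a genuine density) is a worthwhile supplement that the paper only implicitly covers via the citation to \cite[p.25]{arras0} and the later computation \eqref{numeradensity}, but it does not change the nature of the argument.
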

	\begin{proof}
		From \eqref{qeqn1} and \eqref{cumul}, we have
		\begin{align*}
			Cov(X,g(X))&=\displaystyle\int_{0}^{1}\mathbb{E}\left(\displaystyle\int_{\mathbb{R}}g^{\prime}(X_s+v)\eta_{1}(v)dv\right)ds\\
			&=\displaystyle\int_{0}^{1}\mathbb{E}\left(\displaystyle\int_{\mathbb{R}}g^{\prime}(X+v)\eta_{1}(v)dv\right)ds \text{ (since $X_s \overset{d}{=}X$)}\\
			&=\mathbb{E}\left(\displaystyle\int_{\mathbb{R}}g^{\prime}(X+v)\eta_{1}(v)dv\right)\\
			&=Var(X)\mathbb{E} \left(g^{\prime}(X+Y_1)\right),
		\end{align*}
	\noindent
	which proves the result.	
	\end{proof}
	\begin{rem}
		(i) If $X\sim \mathcal{N}(0,\sigma^2),$ then Stein identity is $$\mathbb{E}(Xg(X))=\sigma^2 \mathbb{E} (g^{\prime} (X)), $$
		\noindent
		where $g$ is an absolutely continuous function with $\mathbb{E}[g^{\prime}(X)] < \infty$. However, in general if $X$ is such that $\mathbb{E}(X)=0,$ and $Var(X)=\sigma^2$, then $X^*$ is said to have $X$-zero bias distribution if 
		$$\mathbb{E}(Xg(X))=\sigma^2 \mathbb{E} (g^{\prime} (X^*)),$$
		\noindent
		for all absolutely continuous $``g"$.
		
		\item (ii)Let $X^*=X+Y_1$. Then \eqref{eqn1} becomes
		$$Cov(X,g(X))=Var(X)\mathbb{E}\left(g^{\prime} (X^*) \right), $$
		and so we may say $X^*$ has the $X$-non-zero bias distribution. Usually, the covariance identities are for $X$-zero bias distributions, see \cite{cov1}.
	\end{rem}
	
	\noindent
	Note that
	\begin{align}
	\nonumber	\displaystyle\int_{0}^{\infty}\eta_{k}^{+}(u)du&=\int_{0}^{\infty}\int_{u}^{\infty}y^{k}\nu(dy)du,~u>0\\
	\nonumber	&=\int_{0}^{\infty}y^{k}\bigg(\int_{0}^{y}du \bigg)\nu(dy)\\
		&=\int_{0}^{\infty}y^{k+1}\nu(dy).\label{numeradensity}
	\end{align}
	\noindent
	If $X\sim IDD (\mu, 0, \nu)$ is non-negative, then the $k$-th cumulant of $X$ is $C_{k}(X)=\int_{0}^{\infty}u^{k}\nu(du),$ for some integer $k\geq 1,$ and in that case

	\begin{align}\label{qpdfzbias}
		f_{k}(y)=\frac{\eta_{k}^{+}(y)}{C_{k+1}(X)}, ~y \in (0,\infty),
	\end{align}
	\noindent
	is a density on $\mathbb{R}_{+}=(0,\infty)$.
		
	\vskip 2ex
	\noindent
	We now have the following Corollary.	
	
	\begin{cor}\label{pcid}
		Let $X\sim\text{IDD}(\mu, 0, \nu)$ be non-negative which has $\mathbb{E}(X^n) <\infty$, $n\in \mathbb{N}$, and $Y_{k}$ ($k\geq 1$), independent of $X$, has the density as defined in \eqref{qpdfzbias}. Also let the random vector $(X_s,Y_s)$ have the cf given in \eqref{jointcf}. Then 	
		\begin{equation}
		\text{Cov}(X^{n},g(X))=\sum_{k=0}^{n-1}\binom{n}{k}C_{n-k+1}(X) \displaystyle\int_{0}^{1}\mathbb{E} \left(Y_s^{k}g^{\prime}(X_s+Y_{n-k})\right)ds,
		\end{equation}
		where $g$ is such that $\mathbb{E} \left(Y_s^kg^{\prime}(X_s+Y_{n-k})\right)<\infty,$ for $0\leq k \leq (n-1).$ 
	\end{cor}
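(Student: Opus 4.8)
The plan is to derive this corollary as a direct specialization of Theorem~\ref{qtheorem1} to the non-negative setting, where the two-sided structure of $\eta_{n-k}$ collapses and the inner $v$-integral can be recognized as an expectation against the density $f_{n-k}$.

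First I would start from the established identity \eqref{qeqn1},
\[
\text{Cov}(X^{n},g(X))=\sum_{k=0}^{n-1}\binom{n}{k}\int_{0}^{1}\mathbb{E}\left(Y_s^{k}\int_{\mathbb{R}}g^{\prime}(X_s+v)\eta_{n-k}(v)\,dv\right)ds,
\]
and use non-negativity. Since $X\sim IDD(\mu,0,\nu)$ is non-negative, its L\'evy measure $\nu$ is carried by $(0,\infty)$; consequently $\eta_{n-k}^{-}(u)=-\int_{-\infty}^{u}y^{n-k}\nu(dy)=0$ for $u<0$, so by \eqref{qnnot0} we have $\eta_{n-k}(v)=\eta_{n-k}^{+}(v)\mathbf{I}_{(0,\infty)}(v)$. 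Hence the inner integral reduces to $\int_{0}^{\infty}g^{\prime}(X_s+v)\eta_{n-k}^{+}(v)\,dv$.

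Next I would rewrite $\eta_{n-k}^{+}$ in terms of the density. By \eqref{qpdfzbias} we have $\eta_{n-k}^{+}(v)=C_{n-k+1}(X)f_{n-k}(v)$, where $f_{n-k}$ is a genuine probability density on $(0,\infty)$ — precisely what \eqref{numeradensity} guarantees, since $\int_{0}^{\infty}\eta_{n-k}^{+}(u)\,du=\int_{0}^{\infty}y^{n-k+1}\nu(dy)=C_{n-k+1}(X)$ for non-negative $X$. Thus, for each fixed realization of $X_s$,
\[
\int_{0}^{\infty}g^{\prime}(X_s+v)\eta_{n-k}^{+}(v)\,dv=C_{n-k+1}(X)\int_{0}^{\infty}g^{\prime}(X_s+v)f_{n-k}(v)\,dv=C_{n-k+1}(X)\,\mathbb{E}\big(g^{\prime}(X_s+Y_{n-k})\,\big|\,X_s\big),
\]
where $Y_{n-k}$ has density $f_{n-k}$ and is taken independent of $(X_s,Y_s)$.

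Finally, substituting this back and merging the conditional expectation with the outer $\mathbb{E}$ yields
\[
\text{Cov}(X^{n},g(X))=\sum_{k=0}^{n-1}\binom{n}{k}C_{n-k+1}(X)\int_{0}^{1}\mathbb{E}\big(Y_s^{k}g^{\prime}(X_s+Y_{n-k})\big)\,ds,
\]
which is the claim. The main technical point to get right is the interchange justifying this last step: I must apply Fubini--Tonelli to swap the $v$-integration, now realized as the expectation over the independent $Y_{n-k}$, with the expectation over $(X_s,Y_s)$, and the stated hypothesis $\mathbb{E}(Y_s^{k}g^{\prime}(X_s+Y_{n-k}))<\infty$ is exactly the integrability condition that licenses it. A secondary but routine point is confirming that non-negativity of $X$ forces $\nu$ to live on $(0,\infty)$, which is what annihilates the negative part $\eta_{n-k}^{-}$ and renders $f_{n-k}$ a bona fide density.
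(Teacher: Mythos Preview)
Your proposal is correct and follows essentially the same approach as the paper: start from Theorem~\ref{qtheorem1}, use that non-negativity of $X$ forces $\nu$ to be supported on $(0,\infty)$ so that $\eta_{n-k}(v)=\eta_{n-k}^{+}(v)\mathbf{I}_{(0,\infty)}(v)$, and then recognize the inner integral as $C_{n-k+1}(X)$ times an expectation against the density $f_{n-k}$. Your discussion of the Fubini--Tonelli justification and the role of the integrability hypothesis is in fact more explicit than the paper's own proof.
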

	\begin{proof}
		Since $X\sim IDD(\mu, 0, \nu)$ is non-negative, the support of $\nu$ is in $(0,\infty)$. Also, from

		\noindent
		the assumption $\mathbb{E}(X^n) <\infty$, $n\in \mathbb{N}$, for all $v>0$, $\eta_k^{+}(v)=\int_{v}^{\infty}y^k \nu (dy), ~k\geq1.$ Hence from Theorem \ref{qtheorem1}, we get
		\begin{align*}
		Cov(X^{n},g(X))
		=&\sum_{k=0}^{n-1}\binom{n}{k}\displaystyle\int_{0}^{1}\mathbb{E}\left(Y_s^{k}\displaystyle\int_{0}^{\infty}g^{\prime}(X_s+v)\eta_{n-k}^{+}(v)dv\right)ds\\
		=&\sum_{k=0}^{n-1}\binom{n}{k}C_{n-k+1}(X) \displaystyle\int_{0}^{1}\mathbb{E} \left(Y_s^{k}g^{\prime}(X_s+Y_{n-k})\right)ds,
		\end{align*}		
		\noindent
		which proves the result.
	\end{proof}		
\begin{rem}	

 (i) In Theorem \ref{qtheorem1} and Corollary \ref{theorem1}, we assume that the rv $X$ has higher order moments, which are satisfied for Poisson, negative binomial, gamma, Laplace, bilateral-gamma and variance-gamma distributions.
	
\item[(ii)] Note that the identity \eqref{eqn1} does not hold for stable distributions which do not have finite second moment.  
\end{rem}		
\noindent
Next, we discuss some examples.
\begin{exmp}(Generalized tempered stable distributions)
	Let $X \sim GTSD(\mu,\beta,\alpha^{+}$ $,\lambda^{+},\alpha^{-},\lambda^{-})$ with cf $\phi_{\text{gts}}$ given in \eqref{nep5}. By Corollary \ref{theorem1}, a covariance identity for $X$ is
	\begin{align}
	Cov(X,g(X))=\Gamma(2-\beta)\left(\frac{\alpha^{+}}{(\lambda^{+})^{2-\beta}}+  \frac{\alpha^{-}}{(\lambda^{-})^{2-\beta}} \right)\mathbb{E}\left(g^{\prime}(X+Y_1) \right),
	\end{align}
	\noindent
	where the rv $Y_1$ has the density
	\begin{align}\label{newden00}
	f_1(y)= \frac{\alpha^+ \left[\int_{y}^{\infty}u^{-\beta}e^{-\lambda^+ u}du\right]\textbf{I}_{(0,\infty)}(y)+\alpha^- \left[\int_{-\infty}^{y}|u|^{-\beta}e^{-\lambda^- |u|}du\right]\textbf{I}_{(-\infty,0)}(y)   }{\Gamma(2-\beta)\left(\frac{\alpha^{+}}{(\lambda^{+})^{2-\beta}}+  \frac{\alpha^{-}}{(\lambda^{-})^{2-\beta}} \right)},~y\in\mathbb{R}.
	\end{align}
	\vfill

	\noindent
	Also, by Theorem \ref{qtheorem1}, a covariance identity for $X^{2}$ is
	\begin{align*}
	Cov(X^{2},g(X))=&\sum_{k=0}^{1}\binom{2}{k}\displaystyle\int_{0}^{1}\mathbb{E}\left(Y_s^{k}\displaystyle\int_{\mathbb{R}}g^{\prime}(X_s+v)\eta_{2-k}(v)dv\right)ds\\
	=&2\Gamma(2-\beta)\left(\frac{\alpha^{+}}{(\lambda^{+})^{2-\beta}}+  \frac{\alpha^{-}}{(\lambda^{-})^{2-\beta}} \right)\displaystyle\int_{0}^{1}\mathbb{E}\bigg(Y_sg^{\prime}(X_s+Y_1)\bigg)ds\\
	&\quad\quad+\mathbb{E}\left(\displaystyle\int_{\mathbb{R}}g^{\prime}(X+v)\eta_{2}(v)dv \right)~(\text{since } X_s \overset{d}{=}X),
	\end{align*}
	\noindent
	where $(X_s,Y_s)$ has the cf $\phi_s(t,z)=\phi_{\text{gts}}^{1-s}(t)\phi_{\text{gts}}^{1-s}(z) \phi_{\text{gts}}^s (t+z),~t,z\in\mathbb{R},$ $Y_1$ has the density as in \eqref{newden00} and $\eta_{2}$ is given by
	\begin{align*}
	\eta_{2}(v)&= \alpha^{+}\left[\int_{v}^{\infty}u^{-(\beta -1)}e^{-\lambda^{+}u}du\right]\mathbf{I}_{(0,\infty)}(v)\\
	&\quad\quad-\alpha^{-}\left[\int_{-\infty}^{v}|u|^{-(\beta -1)}e^{-\lambda^{-}|u|}du   \right]\mathbf{I}_{(-\infty,0)}(v), ~v\in\mathbb{R}.
	\end{align*}
\end{exmp}

\begin{exmp}(Variance-gamma distributions)
	Let $X\sim VGD_0(\mu_{0},\alpha,\lambda^{+},\lambda^{-})$ with cf $\phi_{vg}$ given in \eqref{cfvgd1}. By Corollary \ref{theorem1}, a covariance identity for $X$ is
	\begin{align}\label{VGDCI}
	Cov (X,g(X))=\frac{\alpha((\lambda^{+})^{2}+(\lambda^{-})^{2})}{(\lambda^{+})^{2}(\lambda^{-})^{2}} \mathbb{E}\left(g^{\prime}(X+Y_1)\right),
	\end{align}
	\noindent
	where the rv $Y_1$ has the density	
	\begin{align}
	 f_{1}(y)=\frac{\eta_{1}(y)}{\int_{\mathbb{R}} y^{2} \nu_{vg} (dy)}
	=\frac{(\lambda^{+})^{2}(\lambda^{-})^{2}}{(\lambda^{+})^{2}+(\lambda^{-})^{2}}\left(\frac{e^{-\lambda^{+}y}}{\lambda^{+}}\mathbf{I}_{(0,\infty)}(y)+ \frac{e^{\lambda^{-}y}}{\lambda^{-}}\mathbf{I}_{(-\infty,0)}(y)  \right).\label{VGCIDEN}
	\end{align}

	\noindent
	Also, by Theorem \ref{qtheorem1}, a covariance identity for $X^{2}$ is
		\begin{align*}
	Cov(X^{2},g(X))=&\sum_{k=0}^{1}\binom{2}{k}\displaystyle\int_{0}^{1}\mathbb{E}\left(Y_s^{k}\displaystyle\int_{\mathbb{R}}g^{\prime}(X_s+v)\eta_{2-k}(v)dv\right)ds\\
	=&\frac{2\alpha\bigg((\lambda^{+})^{2}+(\lambda^{-})^{2}\bigg)}{(\lambda^{+})^{2}(\lambda^{-})^{2}}\displaystyle\int_{0}^{1}\mathbb{E}\bigg(Y_sg^{\prime}(X_s+Y_1)\bigg)ds\\
	&\quad\quad+\mathbb{E}\left(\displaystyle\int_{\mathbb{R}}g^{\prime}(X+v)\eta_{2}(v)dv \right)~(\text{since } X_s \overset{d}{=}X),
	\end{align*}
	\noindent
	where $(X_s,Y_s)$ has the cf $\phi_s(t,z)=\phi_{vg}^{1-s}(t)\phi_{vg}^{1-s}(z) \phi_{vg}^s (t+z),~t,z\in\mathbb{R},$ $Y_1$ has the density as in \eqref{VGCIDEN} and $\eta_{2}$ is given by
	\begin{align*}
	\eta_{2}(v)= \alpha^{+}\left[\int_{v}^{\infty}ue^{-\lambda^{+}u}du\right]\mathbf{I}_{(0,\infty)}(v)-\alpha^{-}\left[\int_{-\infty}^{v}|u|e^{-\lambda^{-}|u|}du   \right]\mathbf{I}_{(-\infty,0)}(v),~v\in\mathbb{R}.
	\end{align*}
		
	\noindent
	When $\alpha=1,$ $\lambda^{+}=\lambda^{-}=\frac{1}{\delta},$ $\delta>0$, we get the L\'evy measure for $La(\mu_0,\delta^{2})$ as \begin{align}\label{laplevy}
	\nu_{l}(du)=\left(\frac{e^{-\frac{u}{\delta}}}{u}\textbf{I}_{(0,\infty) }(u)+\frac{e^{-\frac{|u|}{\delta}}}{|u|}\textbf{I}_{(-\infty,0) }(u) \right)du,
	\end{align}
	\noindent
	and a covariance identity for a rv $X\sim La (\mu_0, \delta^{2})$ is

	\begin{align*}
	Cov(X,g(X))=2\delta^{2}\mathbb{E}\left(g^{\prime}(X+Y_1)\right).
	\end{align*}

	\noindent
	Here the rv $Y_1$ has density
	\begin{align}
	\nonumber f_1(y)&=\frac{\eta_{1}(y)}{\int_{\mathbb{R}}u^{2}\nu_{l}(du)  }\\
	\nonumber&=\frac{1}{2 \delta^{2}}\bigg( \left[\int_{y}^{\infty}u \nu_{l}(du) \right]\mathbf{I}_{(0,\infty)}(y)+ \left[\int_{-\infty}^{y}(-u) \nu_{l}(du) \right]\mathbf{I}_{(-\infty,0)}(y)   \bigg) \\
\nonumber	&=\frac{1}{2 \delta}\bigg( e^{-\frac{y}{\delta}} \mathbf{I}_{(0,\infty)}(y)+ e^{\frac{y}{\delta}}\mathbf{I}_{(-\infty,0)}(y)   \bigg)\\
\label{laplevy100}	&=\frac{1}{2 \delta}e^{- \frac{|y|}{\delta}},
	\end{align}
	
\noindent
which is $La(0,\delta^2)$ density.

\end{exmp}		

\begin{exmp} (Poisson distribution)
	Let $X \sim Poi (\lambda)$. Then by Corollary \ref{theorem1}, a covariance identity for $X$ is
	\begin{align}
	Cov(X,g(X))=\lambda \mathbb{E}\left(g^{\prime}(X+Y_1)\right).
	\end{align}
	\noindent
	If $X\sim Poi(\lambda)$, then we have $\nu(du)=\lambda \delta_{1}(du)$, and
	 $$\eta_{1}(y)=\int_{y}^{\infty}u\nu(du)=\lambda\int_{y}^{\infty}u\delta_{1}(du)=\lambda \textbf{I}_{(0,1)}(y),$$
	 \noindent
	  $\text{and } \int_{0}^{\infty}u^2 \nu(du)=\lambda.$ So, the rv $Y_1$ has the density
	$$f_1(y)=\frac{\eta_{1}(y)}{\int_{\mathbb{R}}u^2 \nu(du)}=\textbf{I}_{(0,1)}(y),~y>0.$$
	\noindent
	That is, $Y_1$ follows uniform $U(0,1)$ distribution.
	
	\noindent
	Also, by Corollary \ref{pcid}, a covariance identity for $X^{n},$ $n\geq2$, is
	\begin{align}
\nonumber	Cov(X^{n},g(X))&=\sum_{k=0}^{n-1}\binom{n}{k}C_{n-k+1}(X)\displaystyle\int_{0}^{1}\mathbb{E} \left(Y_s^{k}g^{\prime}(X_s+Y_{n-k}) \right)ds\\
	&=\lambda \sum_{k=0}^{n-1}\binom{n}{k}\displaystyle\int_{0}^{1}\mathbb{E} \left(Y_s^{k}g^{\prime}(X_s+Y_{n-k}) \right)ds,\label{poiidnew}
	\end{align}
	\noindent
	where the last step follows as $C_{k}(X)=\lambda,$ for all $k\geq 1$, and $Y_{k}$'s $(k\geq 1)$ have the density
\begin{align*}
	f_{k}(y) =\frac{\int_{y}^{\infty}u^{k}\nu(du) }{C_{k+1}(X)}=  \int_{y}^{\infty}u^{k}\delta_{1}(du) =\textbf{I}_{(0,1)}(y),~y>0,   
	\end{align*}
\noindent
which is uniform $U(0,1)$ density. Note also that $(X_s,Y_s)$ has the cf
\begin{align}\label{poiidnew2}
\phi_s(t,z)=\exp \bigg(\lambda(1-s)(e^{it}-1) +\lambda(1-s)(e^{iz}-1)+\lambda s(e^{i(t+z)}-1)   \bigg),~t,z\in\mathbb{R},
\end{align}	
since $\phi_{X}(t)=\exp\left(\lambda (e^{it} -1)  \right)$. 

\noindent
For example, when $n=2$ and $g(x)=x^2$, then from \eqref{poiidnew}, we get
\begin{align}
\nonumber	Cov(X^2,X^2)&=2\lambda\sum_{k=0}^{1}\binom{2}{k}\displaystyle\int_{0}^{1}\mathbb{E}\bigg(Y_{s}^{k} \bigg(X_s + Y_{2-k}   \bigg)   \bigg)ds\\
\nonumber	&=2\lambda \displaystyle\int_{0}^{1}\mathbb{E}\bigg(X_s+Y_2 \bigg)ds+ 4\lambda \displaystyle\int_{0}^{1}\mathbb{E}\bigg(Y_s\bigg(X_s+Y_1 \bigg)\bigg)ds\\
\nonumber	&=2\lambda\displaystyle\int_{0}^{1}\bigg(\lambda +\frac{1}{2} \bigg)ds+ 4\lambda  \displaystyle\int_{0}^{1}\bigg(\mathbb{E}( X_s Y_s)+\mathbb{E}(X_s ) \mathbb{E}(Y_1)  \bigg) ds    \\
	&=4\lambda  \displaystyle\int_{0}^{1}\mathbb{E}\bigg( X_s Y_s\bigg)  ds   +\lambda (4\lambda +1).\label{poiidnew1}
\end{align}

\noindent
From \eqref{poiidnew2}, we get
\begin{align}\label{poiidnew3}
	\mathbb{E}\bigg(X_sY_s \bigg)=(i)^{-2}\frac{\partial^{2}}{\partial t \partial z} \phi_{s}(t,z) \bigg|_{t=z=0}=\lambda^{2}+\lambda s. 
\end{align}	
\noindent
Using \eqref{poiidnew3} in \eqref{poiidnew1}, we get
\begin{align}
\nonumber Cov(X^2,X^2)&=4\lambda \displaystyle\int_{0}^{1}\bigg(\lambda^{2} +\lambda s \bigg)ds +\lambda (4\lambda +1)\\
&=\lambda (4\lambda^2 + 6\lambda +1),	
\end{align}
\noindent
which can be checked through direct calculation also.	 
\end{exmp}

\section{Variance bounds}\label{vbbd}
\noindent
In this section, we discuss the use of covariance identities to establish bounds on variance of function of infinitely divisible rv.		
\subsection{Cacoullos type variance bounds}
Deriving upper bounds on the variance of a function of a rv has a long and rich history, starting in 1981 from the work of Chernoff \cite{Chernoff}. Over the years, upper and lower variance bounds have received much interest in the statistics literature; see, for instance \cite{caco,cov1} and the references therein. In the following result, we obtain two-side bounds (Cacoullos type; see\cite{caco}) on the variance of $g(X)$, where $X\sim IDD(\mu,0,\nu)$. Our approach is new and exploits the identity \eqref{eqn1}, which is based on non-zero-biased distribution of $X$. Recall first that an upper bound for variance of function of an infinitely divisible rv due to Chen \cite[Theorem 4.1]{lhy} is as follows.
\begin{lem}
	Let $X\sim\text{IDD}(\mu, 0, \nu)$. Then 
	\begin{align}\label{Hcorep1}
		Var(g(X)) \leq \mathbb{E}\displaystyle\int_{\mathbb{R}} \left(g(X+u)-g(X)   \right)^{2} \nu (du),  
	\end{align}
	where $g:\mathbb{R} \to \mathbb{R}$ is such that $\mathbb{E}\bigg(g^2(X)+\int_{\mathbb{R}} \left(g(X+u)-g(X)   \right)^{2} \nu (du)\bigg)<\infty.$
\end{lem}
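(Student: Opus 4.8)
The plan is to start from the covariance representation of \cite[Proposition 2]{covrep}, already recalled in \eqref{newlab0}, and specialize it by taking $f=g$. This yields
\begin{align*}
Var(g(X)) = \int_{0}^{1}\mathbb{E}\int_{\mathbb{R}} \big(g(Y_s +u)-g(Y_s)\big)\big(g(X_s +u) -g(X_s)\big)\nu(du)\,ds,
\end{align*}
so that the entire problem reduces to bounding this bilinear integrand by the symmetric quantity appearing on the right-hand side of \eqref{Hcorep1}.

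The next step is to apply the Cauchy--Schwarz inequality twice. First, for each fixed realization of the pair $(X_s,Y_s)$, I would apply it to the inner $\nu$-integral to separate the cross term:
\begin{align*}
\int_{\mathbb{R}} \big(g(Y_s +u)-g(Y_s)\big)\big(g(X_s +u)-g(X_s)\big)\nu(du) \leq A(Y_s)^{1/2}\,A(X_s)^{1/2},
\end{align*}
where I abbreviate $A(x):=\int_{\mathbb{R}}\big(g(x+u)-g(x)\big)^2\nu(du)$. Second, I would apply Cauchy--Schwarz to the expectation over $(X_s,Y_s)$ to factor the result, obtaining $\mathbb{E}\big[A(Y_s)^{1/2}A(X_s)^{1/2}\big]\leq \big(\mathbb{E}A(Y_s)\big)^{1/2}\big(\mathbb{E}A(X_s)\big)^{1/2}$.

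The key simplification then comes from the marginal identities noted just before Theorem \ref{qtheorem1}: since $\phi_{X_s}(t)=\phi_{Y_s}(t)=\phi_X(t)$, both $X_s$ and $Y_s$ are distributed as $X$, whence $\mathbb{E}A(X_s)=\mathbb{E}A(Y_s)=\mathbb{E}A(X)=\mathbb{E}\int_{\mathbb{R}}\big(g(X+u)-g(X)\big)^2\nu(du)$. Each factor therefore equals the target right-hand side, the product of the two square roots collapses to it, and because the resulting bound no longer depends on $s$, the integration $\int_0^1(\cdot)\,ds$ leaves it unchanged, which gives exactly \eqref{Hcorep1}.

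I expect the only genuine obstacle to lie not in the inequalities themselves but in justifying the manipulations. One must invoke the moment hypothesis $\mathbb{E}\big(g^2(X)+\int_{\mathbb{R}}(g(X+u)-g(X))^2\nu(du)\big)<\infty$ to ensure $\mathbb{E}A(X)<\infty$, so that every integral above is finite and Fubini's theorem applies when interchanging $\mathbb{E}$, $\int_0^1 ds$ and the $\nu$-integral. This finiteness of $\mathbb{E}A(X)$ is precisely what legitimizes both applications of Cauchy--Schwarz and renders the final bound meaningful.
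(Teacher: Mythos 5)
Your argument is essentially correct, but it cannot be "the same as the paper's" because the paper offers no proof at all: the lemma is simply quoted as Theorem 4.1 of Chen \cite{lhy}, whose original proof proceeds via stochastic integrals. Your route --- set $f=g$ in the interpolation identity \eqref{newlab0}, apply Cauchy--Schwarz in $L^{2}(\nu)$ pointwise in $(X_s,Y_s)$, apply Cauchy--Schwarz once more to the expectation, and then use the fact (noted just before Theorem \ref{qtheorem1}) that $X_s\overset{d}{=}Y_s\overset{d}{=}X$ so that the resulting bound is free of $s$ --- is mechanically sound, and it has a real virtue: it makes the lemma self-contained within exactly the machinery from \cite{covrep} that the paper already recalls and uses to prove Theorem \ref{qtheorem1}, rather than importing a separate stochastic-calculus result.

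The one point you should flag concerns the function class, and it is a genuine (if limited) gap in generality. The covariance representation \eqref{newlab0} is recalled in the paper only for $f,g\in\mathcal{G}^{1}$, i.e.\ differentiable functions with $f,f^{\prime}$ bounded on bounded intervals, $f^{\prime}$ almost surely continuous, and the stated moment condition; by contrast, the lemma assumes only that $g$ is a measurable function with $\mathbb{E}\bigl(g^{2}(X)+\int_{\mathbb{R}}(g(X+u)-g(X))^{2}\nu(du)\bigr)<\infty$. As written, your argument therefore establishes \eqref{Hcorep1} only for $g\in\mathcal{G}^{1}$; recovering the full statement requires either an approximation argument (smooth $g_n\to g$ with passage to the limit under the finiteness hypothesis) or the $L^{2}$-domain version of the covariance representation actually proved in \cite{covrep}. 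That extra generality --- together with the Gaussian term $\sigma^{2}\mathbb{E}[g^{\prime}(X)^{2}]$ when $\sigma>0$ --- is precisely what Chen's stochastic-integral proof buys. For the only use the paper makes of the lemma, namely the upper bound in Theorem \ref{AVBth1} where $g$ is taken absolutely continuous, the restriction is immaterial, but it should be stated rather than left implicit.
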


\noindent
Next, we establish a Cacoullos type variance bounds for an infinitely divisible rv.
\vfill

\begin{thm}\label{AVBth1}		
	Let $X\sim IDD(\mu,0,\nu)$ and $Y_1$ have density defined in \eqref{pdfzbias}. Then
	\begin{align}\label{VBGTSD}
	Var (X)\mathbb{E}^{2}\left[ g^{\prime} (X+Y_1)\right]\leq Var (g(X)) \leq Var(X) \mathbb{E}\left[ g^{\prime}(X+Y_1)\right]^{2},
	\end{align}
	where $g$ is an absolutely continuous function with $\mathbb{E} \left(g^{\prime}(X+Y_1)\right)<\infty.$
\end{thm}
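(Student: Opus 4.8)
The plan is to establish the two inequalities separately, both resting on the covariance identity \eqref{eqn1} of Corollary \ref{theorem1}. For the lower bound, I would simply invoke the Cauchy--Schwarz inequality for covariances, $Cov(X,g(X))^2 \le Var(X)\,Var(g(X))$, and then substitute $Cov(X,g(X)) = Var(X)\,\mathbb{E}\left(g^{\prime}(X+Y_1)\right)$ from \eqref{eqn1}. This produces $Var(X)^2\,\mathbb{E}^2\left(g^{\prime}(X+Y_1)\right) \le Var(X)\,Var(g(X))$, and cancelling one factor of $Var(X)$ (which is positive) gives precisely the left-hand inequality in \eqref{VBGTSD}.

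For the upper bound the natural starting point is Chen's inequality \eqref{Hcorep1}, so it suffices to bound $\mathbb{E}\int_{\mathbb{R}}\left(g(X+u)-g(X)\right)^{2}\nu(du)$ above by $Var(X)\,\mathbb{E}\left[g^{\prime}(X+Y_1)\right]^{2}$. Writing $g(X+u)-g(X)=\int_{0}^{u}g^{\prime}(X+v)\,dv$ for $u>0$ (and the analogous representation $\int_{u}^{0}g^{\prime}(X+v)\,dv$ for $u<0$) and applying Cauchy--Schwarz to this inner integral yields $\left(g(X+u)-g(X)\right)^{2}\le |u|\int (g^{\prime}(X+v))^{2}\,dv$ over the appropriate range. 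Substituting this into the integral against $\nu$ and interchanging the order of integration by Fubini converts $\int_{0}^{\infty}u\int_{0}^{u}(\cdot)\,dv\,\nu(du)$ into $\int_{0}^{\infty}(\cdot)\,\eta_{1}^{+}(v)\,dv$, with the symmetric computation on $(-\infty,0)$ producing the weight $\eta_{1}^{-}(v)$ from \eqref{qnnot1}. Thus the whole bound collapses to $\mathbb{E}\int_{\mathbb{R}}(g^{\prime}(X+v))^{2}\,\eta_{1}(v)\,dv$. Finally, using $\eta_{1}(v)=Var(X)\,f_{1}(v)$ from \eqref{pdfzbias} together with the independence of $Y_1$ and $X$, this equals $Var(X)\,\mathbb{E}\left[g^{\prime}(X+Y_1)\right]^{2}$, which is the right-hand inequality.

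I expect the main obstacle to lie in the upper bound, specifically in justifying the Fubini interchange under the stated moment and integrability hypotheses on $g$, and in confirming that the Cauchy--Schwarz step is responsible for introducing exactly the factor $|u|$ that, after the interchange, assembles the first-moment weight $\eta_{1}$. It is precisely this matching that aligns the weight appearing in Chen's bound with the non-zero-bias density $f_{1}$ driving the identity \eqref{eqn1}, so the two sides of \eqref{VBGTSD} are governed by the same auxiliary variable $Y_1$; making that alignment rigorous is the crux of the argument.
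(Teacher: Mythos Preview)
Your proposal is correct and follows essentially the same route as the paper: the lower bound via Cauchy--Schwarz applied to $Cov(X,g(X))$ together with \eqref{eqn1}, and the upper bound via Chen's inequality \eqref{Hcorep1} combined with the Cauchy--Schwarz estimate $\left(\int_{0}^{u}g^{\prime}(X+v)\,dv\right)^{2}\le |u|\int_{0}^{u}(g^{\prime}(X+v))^{2}\,dv$ and a Fubini interchange that produces the weight $\eta_{1}$. The paper runs the upper-bound chain in the reverse direction (starting from $Var(X)\,\mathbb{E}[g^{\prime}(X+Y_1)]^{2}$ and working down to $Var(g(X))$), but the ingredients and the logic are identical.
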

\begin{proof}
	From \eqref{eqn1}, we have
	\begin{align}
	\nonumber (Var(X))^{2}\mathbb{E}^{2}\left[ g^{\prime} (X+Y_1)\right]=&(C_{2}(X))^{2}\mathbb{E}^{2}\left[ g^{\prime} (X+Y_1)\right]\\
	\nonumber=&\left[ Cov(X,g(X)) \right]^{2}\\
	\nonumber	=&\left[ \mathbb{E}\left( (X-\mathbb{E}X)(g(X)-\mathbb{E}g(X)\right)\right]^{2}\\
	\label{VB0} & \leq Var(X)Var(g(X)),
	\end{align}
	\noindent
	by Cauchy-Schwarz inequality. That is,
	\begin{align}\label{VB1}
	Var(g(X))\geq Var(X) \mathbb{E}^{2}\left[ g^{\prime} (X+Y_1)\right],
	\end{align}
	\noindent
	where $Y_1$ has the density given in \eqref{pdfzbias}.
	
	\noindent
	Next, observe that
	\begin{align}
	\nonumber	Var(X)\mathbb{E}\left[g^{\prime}(X+Y_1)\right]^{2}=&C_{2}(X)\mathbb{E}\left[g^{\prime}(X+Y_1)\right]^{2}\\
	\nonumber=&C_{2}(X)\mathbb{E}\int_{\mathbb{R}} \left[g^{\prime}(X+v)\right]^{2} f_{1}(v)dv\\
	\nonumber=&\mathbb{E}\displaystyle\int_{\mathbb{R}}\left[g^{\prime}(X+v)\right]^{2} \eta_{1}(v)dv \\
	\nonumber=&\mathbb{E}\left[\displaystyle\int_{0}^{\infty}\left( g^{\prime}(X+v) \right)^{2}\int_{v}^{\infty}u\nu(du) dv\right]\\
	\nonumber&+\mathbb{E}\left[\displaystyle\int_{-\infty}^{0}\left( g^{\prime}(X+v) \right)^{2}\int_{-\infty}^{v}(-u)\nu(du) dv   \right]\\
	\nonumber=&\mathbb{E}\left[\displaystyle\int_{0}^{\infty}\left(\int_{0}^{u}\left( g^{\prime}(X+v) \right)^{2}dv\right) u\nu(du) \right]\\
	\nonumber&+\mathbb{E}\left[\displaystyle\int_{-\infty}^{0}\left(\int_{u}^{0}\left( g^{\prime}(X+v) \right)^{2}dv\right) (-u)\nu(du) \right]\\
	\nonumber=&\mathbb{E}\displaystyle\int_{\mathbb{R}}\left(\int_{0}^{u}\left(g^{\prime} (X+v) \right)^{2} dv\right) u \nu(du)\\
	\nonumber=& \mathbb{E}\displaystyle\int_{\mathbb{R}} \left(\int_{0}^{u}dv\int_{0}^{u}\left(g^{\prime} (X+v) \right)^{2} dv\right)  \nu(du) \\
	\nonumber\geq& \mathbb{E}\displaystyle\int_{\mathbb{R}} \left(\int_{0}^{u}g^{\prime}(X+v)dv  \right)^{2}\nu(du)\\
	\nonumber&~~~~~~~~~~~~~~~~~~~~~~~~~~\quad\quad\quad\text{(by Cauchy-Schwarz inequality)}\\
	\nonumber=&\mathbb{E}\displaystyle\int_{\mathbb{R}}(g(X+u) -g(X) )^{2} \nu(du)\\
	\quad\quad \quad\quad \geq &Var (g(X))\label{VB2} \text{ (using \eqref{Hcorep1})}.
	\end{align}
	\noindent
	 Hence combining \eqref{VB1} and \eqref{VB2}, the desired conclusion follows.	
\end{proof}

\noindent
Some examples follow.
\begin{exmp}(Gamma distribution)
	Let $X\sim Ga(a,b)$, ($a,b>0$) with $pdf$
	\begin{align}
		f_{X}(x)=\frac{b^{a}}{\Gamma(a)}x^{a-1}e^{-bx},~x>0.
	\end{align}
	\noindent 
	Then $X\sim IDD(\mu,0,\nu_g)$, where $\nu_{g}(du)=a\frac{e^{-bu}}{u}\mathbf{I}_{(0,\infty)}(u)du$, $\mu=\int_{\mathbb{R}}u\nu_{g}(du)$ and $Y_1 \sim Ga(1,b)$. Hence, by \eqref{VBGTSD}, we get
	\begin{align}\label{gammavarbd}
		\frac{a}{b^{2}} \mathbb{E}^{2}\left(g^{\prime}(Z)\right) \leq Var(g(X)) \leq \frac{a}{b^{2}} \mathbb{E}\left( g^{\prime} (Z)\right)^{2},
	\end{align}
	where the rv $Z\sim Ga(a+1,b)$.
	
	\vskip 1ex
	\noindent		
    Observe that, the result in \eqref{gammavarbd} coincides with the one given in \cite[equ. (4.1)]{geoP}. 	
\end{exmp}

\noindent
The next two examples are not discussed in the literature.
\begin{exmp}(Laplace distribution)
Let $X \sim La(0,\delta^{2})$, $\delta>0$ with $pdf$

\begin{align*}
	f_{X}(x)=\frac{1}{2 \delta}e^{-\frac{|x|}{\delta}},~x\in\mathbb{R}.
\end{align*}
\noindent 
Then $X\sim IDD(\mu,0,\nu_l)$, where $\mu=\int_{\mathbb{R}}u\nu_{l}(du)$, $\nu_l$ is given in \eqref{laplevy}, and $Y_1$ has the density given in \eqref{laplevy100}.

\noindent
 Hence, by \eqref{VBGTSD}, we get
\begin{align}
2\delta^{2} \mathbb{E}^{2}\left(g^{\prime}(Z)\right) \leq Var(g(X)) \leq  2\delta^{2}\mathbb{E}\left( g^{\prime} (Z)\right)^{2},
\end{align}
\noindent
where the rv $Z\sim VGD_{1}(0,\delta^{2},4,0)$.
\end{exmp}
\begin{exmp}(Two-sided exponential distribution)
	Let $X$ have a two-sided exponential distribution with parameters $a>0$ and $b>0$ with density

	$$f_{X}(x)=\frac{ab}{a+b}\left(e^{-ax}\textbf{I}_{(0,\infty)}(x)+e^{bx}\textbf{I}_{(-\infty,0)}(x)  \right),~x\in \mathbb{R}.$$
\noindent 
Then $X\sim IDD(\mu,0,\nu_e)$, where 
$$\nu_e(du)=\left( \frac{e^{-au}}{u}\mathbf{I}_{(0,\infty)}(u)-\frac{e^{bu}}{u}\mathbf{I}_{(-\infty,0)}(u) \right),$$ $\mu=\int_{\mathbb{R}}u\nu_{e}(du)$ and $Y_1$ has density

$$f_1(y)=\frac{a^{2}b^{2}}{a^{2}+b^{2}}\left( \frac{e^{-ay}}{a}\mathbf{I}_{(0,\infty)}(y)+ \frac{e^{by}}{b}\mathbf{I}_{(-\infty,0)}(y)\right).$$

\noindent
Hence, by \eqref{VBGTSD}, we get
\begin{align}
\frac{a^{2}+b^{2}}{a^{2}b^{2}} \mathbb{E}^{2}\left(g^{\prime}(Z)\right) \leq Var(g(X)) \leq  \frac{a^{2}+b^{2}}{a^{2}b^{2}}\mathbb{E}\left( g^{\prime} (Z)\right)^{2},
\end{align}
where the rv $Z$ has density
$$f_{Z}(z)=\frac{a^{3}b^{3}}{(a+b)(a^{2}+b^{2})} \left(\frac{ze^{-az}}{a}\textbf{I}_{(0,\infty)}(z)+\frac{ze^{bz}}{b}\textbf{I}_{(-\infty,0)}(z)  \right).$$	 
\end{exmp}
	
	\subsection{Application to posterior distributions}\label{application}
	\noindent
	Here, we demonstrate, using Cacoullos type bounds, through examples to compute variance bounds of the parameters of the posterior distributions. Recently Daly {\it et al.} \cite[Example 2.3]{cov1} discussed variance bounds within a Bayesian context for the Pearson family by using Stein kernels. In the Bayesian methodology, the parameters are treated as random variables. Let $(X_1,\ldots,X_n)$ be a random sample of size $n$, where the joint density of $\Theta$ and $X_i$, $1\leq i \leq n,$ is $\pi(\theta,x)$. Assume the prior density of $\Theta$ is $\pi (\theta).$ Then we update from prior to posterior density as $\pi(\theta | x)=\kappa (x) \pi (\theta,x) \pi (\theta),$ where $\kappa(x)$ is a normalizing constant that depends only on the data $x=(x_1,\ldots,x_n)$.



	\begin{exmp}[Gamma data, inference on scale, gamma prior]
		Let $(X|\theta)\sim Ga(k,\theta)$ with $k,\theta>0,$ and $\Theta \sim Ga(a,b)$ with $a,b>0$, the prior distribution. Then the posterior distribution is $(\Theta |x) \sim Ga(nk+a,n\bar{x}+b)$, where $x=(x_1,\ldots,x_n)$. Hence, from \eqref{gammavarbd}, we get
		\begin{align}\label{pobd1}
		\frac{(nk+a)}{(n\bar{x}+b)^{2}} \mathbb{E}^{2}\left(g^{\prime}(Z)\right) \leq Var(g(\Theta|x)) \leq \frac{(nk+a)}{(n\bar{x}+b)^{2}} \mathbb{E}\left( g^{\prime} (Z)\right)^{2},
		\end{align}
		where the rv $Z\sim Ga(nk+a+1,n\bar{x}+b)$.
		
		\vskip 1ex
		\noindent
		For example, when $g(x)=x$, from \eqref{pobd1}, we get
		\begin{align}\label{pobd2}
			Var(\Theta |x)=\frac{(nk+a)}{(n\bar{x}+b)^{2}},
		\end{align}
		\noindent
		as expected.
	
	\noindent
	When $g(x)=x^2$, and using \eqref{pobd1}, we get
	\begin{align}\label{pobd3}
		 \frac{4(nk+a)(nk+a+1)^{2}}{(n\bar{x} +b)^{4}} \leq Var(g(\Theta|x )) \leq \frac{4(nk+a)(nk+a+1) (nk+a+2)}{(n\bar{x} +b)^{4}}.
	\end{align}
	\noindent
	Observe also that, the upper and lower bounds in \eqref{pobd3} coincides with the one given in \cite[Example B.4]{cov1}.	
	\end{exmp}
	
	\begin{exmp}[Poisson data, inference on scale, gamma prior] Let $(X|\theta)\sim Poi (\theta)$ with $\theta>0$, and $\Theta \sim Ga(a,b)$, with $a,b>0$, the prior distribution. Then the posterior distribution is $(\Theta|x) \sim Ga (n\bar{x}+a,n+b)$. Again from \eqref{gammavarbd}, we get
		\begin{align}\label{pobd4}
		\frac{(n\bar{x}+a)}{(n+b)^{2}} \mathbb{E}^{2}\left(g^{\prime}(Z)\right) \leq Var(g(\Theta|x)) \leq \frac{(n\bar{x}+a)}{(n+b)^{2}} \mathbb{E}\left( g^{\prime} (Z)\right)^{2},
		\end{align}
		where the rv $Z\sim Ga(n\bar{x}+a+1,n+b)$.

		\vskip 1ex
		\noindent
		For example, when $g(x)=x$, from \eqref{pobd4}, we get
		\begin{align}\label{pobd5}
		Var(\Theta |x)=\frac{(n\bar{x}+a)}{(n+b)^{2}},
		\end{align}
		\noindent
		as expected.
		
		\noindent
		When $g(x)=x^2$, we get from \eqref{pobd4}
		\begin{align}\label{pobd6}
		\frac{4(n\bar{x}+a)(n\bar{x}+a+1)^{2}}{(n +b)^{4}} \leq Var(g(\Theta|x )) \leq \frac{4(n\bar{x}+a)(n\bar{x}+a+1) (n\bar{x}+a+2)}{(n +b)^{4}}.
		\end{align}
		\noindent
		The bounds given above coincide with the one given in \cite[Example B.6]{cov1}.	
	\end{exmp}

\section{Applications}\label{appofre}	
\noindent
In this section, we discuss applications of our results.
\subsection{Stein identities via covariance identities}
Here, we demonstrate the use of covariance identities to provide Stein identities for several probability distributions. It is well known that such identities are useful to obtain explicit approximations, using Stein's method, for sums of independent rvs, but such applications are beyond the scope of this paper. Before stating our next results, we recall the Schwartz space of functions. Let $\mathcal{S}(\mathbb{R})$ be the Schwartz space defined by
$$\mathcal{S}(\mathbb{R}):=\left\{f\in C^\infty(\mathbb{R}): \lim_{|x|\rightarrow \infty} |x^m f^{(n)}(x)|=0, \text{ for all } m,n\in \mathbb{N}_{0}\right\},$$
\noindent
where $\mathbb{N}_{0}=\mathbb{N}\cup \{0\}$, $f^{(n)}$ denotes the $n$-th derivative of $f$ and $C^\infty(\mathbb{R})$ is the class of infinitely differentiable functions on $\mathbb{R}$ (see \cite{SteinSchwarz} for more details). This function space is useful for choosing an appropriate function space for Stein identities. Our first example yields a Stein identity for $CGMY$ distributions.

\begin{exmp}($CGMY$ distributions)\label{cgmy100}
	Let $X \sim CGMY (\alpha,\beta,\lambda^{+},\lambda^{-})$, whose L\'evy measure $\nu_{cgmy}$ is given in \eqref{CGMYlevy}.
	\noindent
	Recall that $CGMY (\alpha,\beta,\lambda^{+},\lambda^{-}) \overset{d}{=}IDD(\mu,0,\nu_{cgmy})$, where $\mu=\int_{\mathbb{R}}u \nu_{cgmy}(du).$ 
	Hence by Corollary \ref{theorem1}, we have
	\begin{align*}
		Cov(X,g(X))
		=Var(X)\mathbb{E}\bigg(g^{\prime} (X+Y_1)\bigg),
	\end{align*}
	\noindent
	where $Y_1$ has density
	$$f_1(v)=\frac{[\int_{v}^{\infty}u\nu_{cgmy}(du)]\mathbf{I}_{(0,\infty)}(v) + [\int_{-\infty}^{v}(-u)\nu_{cgmy}(du)]\mathbf{I}_{(-\infty,0)}(v)}{Var(X)},~v\in \mathbb{R}.$$
	
	\noindent
	So,
	\begin{align}
	\nonumber	\mathbb{E}\left(X-\mu\right)g(X) =&Cov(X,g(X))\\
	=&Var(X)\mathbb{E}\bigg(g^{\prime} (X+Y_1) \bigg)\\
	\nonumber=&\mathbb{E}\displaystyle\int_{0}^{\infty}g^{\prime}(X+v)\int_{v}^{\infty}u\nu_{cgmy}(du) dv\\
	\nonumber&+\mathbb{E}\displaystyle\int_{-\infty}^{0}g^{\prime}(X+v)\int_{-\infty}^{v}(-u)\nu_{cgmy}(du) dv\\
	\nonumber=&\mathbb{E}\displaystyle\int_{0}^{\infty}\left( \int_{0}^{u}g^{\prime}(X+v)dv\right)u\nu_{cgmy} (du)\\
	\nonumber&+\mathbb{E}\displaystyle\int_{-\infty}^{0}\left( \int_{u}^{0}g^{\prime}(X+v)dv\right)(-u)\nu_{cgmy} (du)\\
	\nonumber=&\mathbb{E}\displaystyle\int_{0}^{\infty}\left(g(X+u) -g(X)\right)u\nu_{cgmy} (du)\\
	\nonumber&+\mathbb{E}\displaystyle\int_{-\infty}^{0}\left(g(X+u) -g(X)\right)u\nu_{cgmy} (du)\\
	\label{cgmysid0}=&\mathbb{E}\left(  \int_{\mathbb{R}}u(g(X+u)-g(X))\nu_{cgmy}(du)    \right).
	\end{align}
	\noindent
	Since $\mu=\int_{\mathbb{R}}u \nu_{cgmy}(du),$ then \eqref{cgmysid0} simplifies to
	\begin{align}
	\mathbb{E} \left(Xg(X)- \int_{\mathbb{R}}ug(X+u)\nu_{cgmy}(du)    \right)=0,\label{num01}
	\end{align}
	
	\noindent
	which is a Stein identity for $CGMY (\alpha,\beta,\lambda^{+},\lambda^{-})$ distribution.
\end{exmp}

\noindent
Next, we obtain a Stein identity for $VGD$, using our approach.
\begin{exmp}(Variance-gamma distributions)\label{corvg}
	Let $X\sim VGD_0(\mu_{0},\alpha,\lambda^{+},\lambda^{-})$ whose L\'evy measure $\nu_{vg}$ is defined in \eqref{levyvgd}. Let $g\in \mathcal{S}(\mathbb{R})$. Now, using the fact $\mathbb{E}(X)=\mu_{0}+\int_{\mathbb{R}}u\nu_{vg}(du)$, and following steps similar to Example \ref{cgmy100}, it can be shown that
	\begin{align}
	\nonumber	\mathbb{E}\bigg((X-\mu_{0})g(X)\bigg)&=\mathbb{E}\int_{\mathbb R}ug(X+u)\nu_{vg}(du)\\
	&=\alpha\mathbb{E}\displaystyle\int_{0}^{\infty}\left( e^{-\lambda^{+}u}g(X+u) - e^{-\lambda^{-}u}g(X-u) \right)du.\label{PP3:c1}
	\end{align}
	\noindent
	Applying integration by parts formula two times on the right hand side of (\ref{PP3:c1}) and rearranging the integrals, we have
	\begin{align}
	\nonumber	\mathbb{E}(X-\mu_{0})g(X)	=&\alpha\left(\frac{1}{\lambda^{+}}-\frac{1}{\lambda^{-}}\right)\mathbb{E}g(X)+\frac{2\alpha}{\lambda^{+}\lambda^{-}}\mathbb{E}g^{\prime}(X)\\
	\nonumber	&+\alpha\left(\frac{1}{\lambda^{+}}-\frac{1}{\lambda^{-}}\right)\mathbb{E}\displaystyle\int_{0}^{\infty}\left(e^{-\lambda^{+}u}g^{\prime}(X+u)-e^{-\lambda^{-}u}g^{\prime}(X-u) \right)du\\
	&\quad+\frac{\alpha}{\lambda^{+}\lambda^{-}}\mathbb{E}\displaystyle\int_{0}^{\infty}\left(e^{-\lambda^{+}u}g^{\prime\prime}(X+u) -e^{-\lambda^{-}u}g^{\prime\prime}(X-u) \right)du.\label{PP3:c4}
	\end{align}
	\noindent
	Next taking $g^{\prime}$ and $g^{\prime \prime}$ in \eqref{PP3:c1}
	\begin{subequations}
		\begin{align}\label{PP3:c5}
		(a)~~\mathbb{E}(X-\mu_{0})g^{\prime}(X)&=\alpha\mathbb{E}\displaystyle\int_{0}^{\infty}\left(e^{-\lambda^{+}u}g^{\prime}(X+u) -e^{-\lambda^{-}u}g^{\prime}(X-u) \right)du,\\
		\label{PP3:c6} (b)~~\mathbb{E}(X-\mu_{0})g^{\prime\prime}(X)&=\alpha\mathbb{E}\displaystyle\int_{0}^{\infty}\left(e^{-\lambda^{+}u}g^{\prime\prime}(X+u) -e^{-\lambda^{-}u}g^{\prime\prime}(X-u) \right)du, 
		\end{align}
	\end{subequations}
	\noindent
	as $g\in\mathcal{S}(\mathbb{R})$. Now, using \eqref{PP3:c5} and \eqref{PP3:c6} on \eqref{PP3:c4}, we get	
	\begin{align}
	\nonumber	\mathbb{E}(X-\mu_{0})g(X)=&\mathbb{E}\left(\alpha\left(\frac{1}{\lambda^{+}}-\frac{1}{\lambda^{-}}\right)g(X) \right)+\frac{2\alpha}{\lambda^{+}\lambda^{-}}\mathbb{E} \left(g^{\prime}(X)\right)\\
	&+\left(\frac{1}{\lambda^{+}}-\frac{1}{\lambda^{-}}\right)\mathbb{E} \left( \left(X-\mu_{0}\right)g^{\prime}(X)\right)+\mathbb{E}\left( \frac{1}{\lambda^{+}\lambda^{-}}(X-\mu_{0})g^{\prime\prime}(X)\right).\label{PP3:c7}
	\end{align}
	\noindent
	Setting $\frac{1}{\lambda^{+}\lambda^{-}}=\sigma^{2}, \left(\frac{1}{\lambda^{+}}-\frac{1}{\lambda^{-}}\right)=2\theta,$ and $\alpha=\frac{r}{2}$, we get $\mathbb{E}(X)=\mu_0+r\theta$ and $Var(X)=r(\sigma^{2}+2\theta^{2})$. Also, for these parameters, (\ref{PP3:c7}) reduces to

	\vfill
	\begin{align}
	\label{e32}	&\mathbb{E}\bigg( \sigma^{2}(X-\mu_0)g^{\prime\prime}(X)+ \left(\sigma^{2}r+2\theta(X-\mu_0) \right)g^{\prime}(X)
	+\left(r\theta-(X-\mu_0)\right)g(X)\bigg)=0,
	\end{align}
	\noindent
	which is a Stein identity for $ VGD_{1}(\mu_0,\sigma^{2},r,\theta)$.
	
	
	\vskip 1ex
	\noindent
	Observe that the Stein identity given by Gaunt \cite{k24}, and our Stein identity given in \eqref{e32} match except for the choice of function space. Note that the derivation of Stein identity given in \cite{k24} is by modifying the density approach developed by Stein \textit{et al.} \cite{den}, and the density of $VGD$ is usually written in terms of modified Bessel functions. 	
\end{exmp}

\noindent
Finally, we obtain a Stein identity for $BGD$. This identity is, in a sense a generalization of the $VGD$ Stein identity, since $VGD_0(0,\alpha,\lambda^{+},\lambda^{-}) \overset{d}{=}BGD(\alpha,\lambda^{+},\alpha,\lambda^{-}).$	
\begin{exmp}(Bilateral-gamma distributions)\label{corBG}
	\noindent
	Let $X\sim BGD(\alpha^{+},\lambda^{+},\alpha^{-},\lambda^{-})$ whose L\'evy measure $\nu_{bg}$ is defined in \eqref{bgdlevy}. Then $BGD(\alpha^{+},\lambda^{+},\alpha^{-},\lambda^{-}) \overset{d}{=}IDD(\mu,0,$ $\nu_{bg})$, where $\mu=\mathbb{E}(X)=\int_{\mathbb{R}}u\nu_{bg}(du)$ and the rv $Y_1$ has density 
	$$f_1(v)=\frac{[\int_{v}^{\infty}u\nu_{bg}(du)]\mathbf{I}_{(0,\infty)}(v) + [\int_{-\infty}^{v}(-u)\nu_{bg}(du)]\mathbf{I}_{(-\infty,0)}(v)}{Var(X)},~v\in \mathbb{R}.$$
	
	\noindent
	Let $g \in \mathcal{S}(\mathbb{R})$. Then by Corollary \ref{theorem1},  
	$$Cov(X,g(X))=Var(X)\mathbb{E}\left(g^{\prime} (X+Y_1) \right)=\mathbb{E}\bigg(  \int_{\mathbb{R}}u(g(X+u)-g(X))\nu_{bg}(du)    \bigg),$$
	which can be written as
	\begin{align}
	\nonumber	\mathbb{E}\bigg(Xg(X)\bigg)&=\mathbb{E}\int_{\mathbb R}ug(X+u)\nu_{bg}(du)\\
	&=\mathbb{E}\displaystyle\int_{0}^{\infty}\left( \alpha^{+}e^{-\lambda^{+}u}g(X+u) -\alpha^{-} e^{-\lambda^{-}u}g(X-u) \right)du.\label{PP3:c10}
	\end{align}
	\noindent
	Following steps similar to the derivation of Stein identity for $VGD_1(\mu_{0},$
	$\sigma^{2},r,\theta)$ in Example \ref{corvg}, we get a Stein identity for $BGD(\alpha^{+},\lambda^{+},\alpha^{-},\lambda^{-})$ as

	\begin{align}
	\nonumber	\mathbb{E}&\bigg( Xg^{\prime\prime}(X)+ \left(\left(\alpha^{+}+\alpha^{-}\right)-\left(\lambda^{+}-\lambda^{-}\right)X \right)g^{\prime}(X)\\
	&\quad\quad+\left(\left(\alpha^{+}\lambda^{-}-\alpha^{-}\lambda^{+}\right)-\lambda^{+}\lambda^{-} X\right)g(X)\bigg)=0, ~~ g\in \mathcal{S}(\mathbb{R}).\label{BG1}
	\end{align}
	
	\noindent
	The identity in \eqref{BG1} has been recently and independently derived by Forrester \cite[Proposition 1]{gammadiff}, using the density approach, and also for a different function space.
	
\end{exmp}

\subsection{Application to weighted premium calculation principles}
In this section, we apply Theorem \ref{qtheorem1} and Corollary \ref{theorem1} to premium calculation principles. First, we obtain a new formula of the $WPCP$ given in \eqref{wpre0} for $IDD$, in terms of L\'evy measure via Stein-type covariance identity given in Corollary \ref{theorem1} which is novel in our opinion. Recall first that the $WPCP$ due to Furman and Zitikis \cite{furman} is as follows.

\begin{defn}\label{def1}
	Let $X$ be a loss rv of risk and $w:[0,\infty) \to [0,\infty) $ be a function such that $0<\mathbb{E}(w(X)) <\infty$. Then the $WPCP$ is defined as 
	\begin{align}\label{wpre}
	\mathcal{H}_{w}(X)=\frac{\mathbb{E}\left(Xw(X) \right) }{\mathbb{E}\left(w(X) \right)},
	\end{align}
\end{defn}
\noindent
which can also be rewritten as
\begin{align}\label{wpre0}
\mathcal{H}_w(X)=\mathbb{E}(X)+\frac{\text{Cov}(X,w(X))}{\mathbb{E}(w(X))}.
\end{align}
\begin{pro}
	Let $X\sim \text{IDD}(\mu,0,\nu)$ with $\mu=\mathbb{E}(X)=\int_{\mathbb R}u\nu(du) <\infty$ and $w:[0,\infty) \to [0,\infty) $ be a function such that $0<\mathbb{E}(w(X)) <\infty$. Then
	\begin{align}\label{wpre04}
		\mathcal{H}_w(X)=  \frac{\mathbb{E}\big(\int_{\mathbb{R}} w(X+u)u\nu(du)\big)}{\mathbb{E}(w(X)}.
	\end{align}	
	\end{pro}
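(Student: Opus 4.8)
The plan is to reduce the whole statement to the covariance identity already in hand. Starting from the reformulation \eqref{wpre0}, namely $\mathcal{H}_w(X)=\mathbb{E}(X)+\text{Cov}(X,w(X))/\mathbb{E}(w(X))$, it suffices to express $\text{Cov}(X,w(X))$ purely through the L\'evy measure $\nu$. Thus the entire argument hinges on producing the representation $\text{Cov}(X,w(X))=\mathbb{E}\big(\int_{\mathbb{R}}u\,(w(X+u)-w(X))\,\nu(du)\big)$.

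First I would obtain this representation exactly as in Example \ref{cgmy100}: applying Corollary \ref{theorem1} with $g=w$ and then running the same Fubini/integration-by-parts chain that culminates in \eqref{cgmysid0} rewrites $Var(X)\,\mathbb{E}(w^{\prime}(X+Y_1))$ as $\mathbb{E}\big(\int_{\mathbb{R}}u\,(w(X+u)-w(X))\,\nu(du)\big)$. Equivalently, and more directly, one may invoke the covariance representation \eqref{newlab0} with $f(x)=x$: since $(Y_s+u)-Y_s=u$ and $X_s\overset{d}{=}X$ for every $s\in[0,1]$, the inner expectation is independent of $s$, so the outer integral over $[0,1]$ collapses and yields $\text{Cov}(X,w(X))=\mathbb{E}\int_{\mathbb{R}}u\,(w(X+u)-w(X))\,\nu(du)$ in one step.

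Next I would split this into $\mathbb{E}\big(\int_{\mathbb{R}}u\,w(X+u)\,\nu(du)\big)-\mathbb{E}(w(X))\int_{\mathbb{R}}u\,\nu(du)$ and use the standing hypothesis $\int_{\mathbb{R}}u\,\nu(du)=\mu=\mathbb{E}(X)$ to recognise the second term as $\mathbb{E}(X)\mathbb{E}(w(X))$. Substituting into \eqref{wpre0}, the leading $\mathbb{E}(X)$ and the $-\mathbb{E}(X)$ arising from this term cancel, leaving precisely the asserted formula \eqref{wpre04}.

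The main obstacle is not the algebra but the justification that $w$ (equivalently $f(x)=x$) lies in the class $\mathcal{G}^{1}$ used in the proof of Theorem \ref{qtheorem1}, so that the covariance representation and the Fubini interchange are legitimate under the weak hypotheses $0<\mathbb{E}(w(X))<\infty$ and $\int_{\mathbb{R}}u\,\nu(du)<\infty$. In particular one must verify the requisite integrability of $u\,w(X+u)$ against $\nu$ and that the splitting of the integral into two finite pieces is valid; this is exactly where the assumed finiteness of $\int_{\mathbb{R}}u\,\nu(du)$ (and hence of $\mathbb{E}(X)$) is indispensable.
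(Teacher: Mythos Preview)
Your proposal is correct and follows essentially the same route as the paper: start from \eqref{wpre0}, use Corollary \ref{theorem1} together with the Fubini manipulation (exactly the chain leading to \eqref{cgmysid0}, which the paper reproduces here as \eqref{wpre1}) to obtain $\text{Cov}(X,w(X))=\mathbb{E}\int_{\mathbb{R}}u(w(X+u)-w(X))\,\nu(du)$, and then absorb the $-\mathbb{E}(w(X))\int_{\mathbb{R}}u\,\nu(du)$ term into the leading $\mathbb{E}(X)$ via the hypothesis $\mu=\int_{\mathbb{R}}u\,\nu(du)$. Your alternative shortcut through \eqref{newlab0} with $f(x)=x$ is a legitimate and slightly more direct variant that the paper does not take, and your remark about the $\mathcal{G}^{1}$-membership of $w$ identifies a regularity issue that the paper's proof also leaves implicit.
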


	\begin{proof}
	 From Corollary \ref{theorem1}, we get 
		$$	\text{Cov}(X,g(X))=Var(X)\mathbb{E} \left(g^{\prime}(X+Y_1)\right),$$
		\noindent
		where the random variables $X$ and $Y_1$ are independent, with $pdf$ $f_{1}(y)$ of $Y_1$ given in \eqref{pdfzbias}. Now
		\begin{align}
		\nonumber\text{Cov}(X,g(X))=& Var(X)\mathbb{E} \left(g^{\prime}(X+Y_1)\right)\\
		\nonumber=&Var(X) \mathbb{E} \left(g^{\prime}(X+Y_1)\right)\\
		\nonumber=&Var(X)\mathbb{E}\displaystyle\int_{\mathbb{R}}g^{\prime}(X+v)f_{1}(v)dv\\
		\nonumber	=&\mathbb{E}\displaystyle\int_{\mathbb{R}}g^{\prime}(X+v) \left(\eta_{1}^{+}(v)\mathbf{I}_{(0,\infty)}(v)+ \eta_{1}^{-}(v)\mathbf{I}_{(-\infty,0)}(v)  \right)dv\\
		\nonumber=&\mathbb{E}\displaystyle\int_{0}^{\infty}g^{\prime}(X+v)\int_{v}^{\infty}u\nu(du) dv\\
		\nonumber&+\mathbb{E}\displaystyle\int_{-\infty}^{0}g^{\prime}(X+v)\int_{-\infty}^{v}(-u)\nu(du) dv\\
		\label{wpre1}=&\mathbb{E}\int_{\mathbb{R}} (g(X+u)-g(X))u\nu(du),
		\end{align}
		\noindent
		where the last equality is followed by Fubini's theorem and adjusting the integrals. Recall that, for any $w:[0,\infty) \to [0,\infty) $ with $0<\mathbb{E}(w(X)) <\infty$, the $WPCP$ (see \cite{furman}) is given by		
		\begin{align}
			\mathcal{H}_{w}(X)=\frac{\mathbb{E} \left( Xw(X)  \right) }{\mathbb{E}(w(X))}=\mathbb{E}(X)+\frac{Cov(X,w(X))}{\mathbb{E} (w(X))} .\label{wpre2}
		\end{align}
		
		\noindent
		 Next, replacing $g$ by $w$ in \eqref{wpre1} and substituting it in \eqref{wpre2}, we have
		\begin{align}
		\nonumber\mathcal{H}_w(X)&=\mathbb{E}(X)+\frac{\mathbb{E} \left(  \int_{\mathbb{R}} \left( w(X+u) -w(X)\right) u\nu (du)\right)}{\mathbb{E} (w(X))}\\
		  \label{wpre4}&=\frac{\mathbb{E}\left(\int_{\mathbb{R}} w(X+u)u\nu(du) \right)}{\mathbb{E}(w(X))},
		\end{align}
		\noindent
		which proves the result.	
	\end{proof}

	\begin{rem}
	Recently, Psarraakos \cite{geoP} proposed an alternate formula of $WPCP$ for a non-negative continuous rv using Stein-type covariance identity and its formula involves the $pdf$ of $X$. Hence the derivation of $WPCP$ is not straightforward, whenever the $pdf$ is not in closed form (or $pdf$ is, in terms of, some special functions, for example, bilateral-gamma \cite{bilateral} and variance-gamma distributions \cite{k24}). 
	\end{rem}

	\begin{exmp}(Compound Poisson distributions).
		Let $X\sim CPD(\nu(\mathbb{R}),\nu_0)$. That is, $X\sim IDD(\mu,0,\nu)$, where $\mu=\int_{\mathbb{R}}u\nu(du)$ and $\nu(du)=\nu(\mathbb{R})\nu_{0}(du)$. Then by \eqref{wpre04}, the $WPCP$ is given by

		$$\mathcal{H}_{w}(X)=\frac{\nu(\mathbb{R})\mathbb{E}\left( \int_{\mathbb{R}} w(X+u)u \nu_{0}(du) \right) }{\mathbb{E}(w(X))}.$$
		\noindent
		In particular, for $w(x)=e^{\kappa x},$ $\kappa>0$, $\nu(\mathbb{R})=\lambda$ and $\nu_{0}(du)=\delta_{1}(du)$, the $WPCP$ reduces to
		\begin{align*}
			\mathcal{H}_{w}(X)=\frac{\lambda \mathbb{E} \left( e^{\kappa (X+u)} u\delta_{1}(du)  \right) }{\mathbb{E} (e^{\kappa X})}=\lambda \int_{\mathbb{R}}e^{\kappa u} u\delta_{1}(du)=\lambda e^{\kappa},
		\end{align*}
		\noindent
		which is the Esscher principle for $Poi(\lambda)$ distribution, see \cite[p. 51]{dickson}. 
		
		\vskip 2ex
		\noindent
		Note that the covariance identity obtained in \cite{geoP} does not hold for discrete distributions. Therefore the $WPCP$ for Poisson distribution can not be retrieved from the covariance identity given in \cite{geoP}. 	
	\end{exmp}
	\noindent
	The following examples are new to the literature and also establish the importance of our approach using L\'evy measure.
	\begin{exmp}[$CGMY$ distributions] 
		Let $X\sim CGMY(\alpha,\beta,\lambda^{+},\lambda^{-})$ with non-zero mean. That is, $X\sim IDD(\mu,0,\nu_{cgmy})$, where $\mu=\mathbb{E}(X)=\int_{\mathbb{R}}u\nu_{cgmy}(du) $ and the L\'evy measure $\nu_{cgmy}$ is defined in \eqref{CGMYlevy}. Also, $Var(X)=\int_{\mathbb{R}}u^{2}\nu_{cgmy}(du)$. Then by \eqref{wpre04}, the $WPCP$ is given by
		\begin{align*}
			\mathcal{H}_{w}(X)&=\frac{\mathbb{E} \left(\int_{\mathbb{R}} w(X+u)u\nu_{cgmy}(du) \right) }{\mathbb{E}(w(X))}.
		\end{align*}
		\noindent
		\noindent
		Let $w(x)=x$ (known as the modified variance principle, see \cite[Section 2]{furman}), then
		\begin{align*}
			\mathcal{H}_{w}(X)=&\frac{\mathbb{E} \left(\displaystyle\int_{\mathbb{R}} (X+u)u\nu_{cgmy}(du) \right) }{\mathbb{E}(X)}\\
			=&\frac{\mathbb{E}(X)\displaystyle\int_{\mathbb{R}} u\nu_{cgmy}(du)+\displaystyle\int_{\mathbb{R}} u^{2}\nu_{cgmy}(du)}{\mathbb{E}(X)}	\\
			=&\mathbb{E}(X)+\frac{Var(X)}{\mathbb{E}(X)} \\
			=&\alpha \left(\frac{\Gamma(1-\beta)}{(\lambda^{+})^{1-\beta}}-  \frac{\Gamma(1-\beta)}{(\lambda^{-})^{1-\beta}} \right)+\frac{\frac{\Gamma(2-\beta)}{(\lambda^{+})^{2-\beta}} +\frac{\Gamma(2-\beta)}{(\lambda^{-})^{2-\beta}} }{\frac{\Gamma(1-\beta)}{(\lambda^{+})^{1-\beta}}-  \frac{\Gamma(1-\beta)}{(\lambda^{-})^{1-\beta}} }.
		\end{align*}
		
		\noindent
		 Also, for $w(x)=e^{\kappa x},~\kappa >0$, the Esscher principle, we have
		\begin{align*}
			\mathcal{H}_w (X)&=\frac{\mathbb{E} \left(\displaystyle\int_{\mathbb{R}} e^{\kappa (X+u)}u\nu_{cgmy}(du) \right) }{\mathbb{E}(e^{\kappa X})}\\
			&=\frac{\mathbb{E}(e^{\kappa X}) \left(\displaystyle\int_{\mathbb{R}}u e^{\kappa u}\nu_{cgmy}(du) \right) }{\mathbb{E}(e^{\kappa X})}\\
			&=\alpha \displaystyle\int_{0}^{\infty}ue^{\kappa u}\frac{e^{-\lambda^{+}u}}{u^{1+\beta}}du+\alpha \displaystyle\int_{-\infty}^{0}ue^{\kappa u}\frac{e^{-\lambda^{-}|u|}}{|u|^{1+\beta}}du\\
			&=\alpha \displaystyle\int_{0}^{\infty}\left(u^{-\beta}e^{-(\lambda^{+}-\kappa )u}- u^{-\beta}e^{-(\lambda^{-}+\kappa )u}\right)du   \\
			&=\frac{\alpha \Gamma (1-\beta)}{(\lambda^{+}- \kappa)^{1-\beta}}- \frac{\alpha \Gamma (1-\beta) }{(\lambda^{-}+\kappa)^{1-\beta}},~0<\kappa <\lambda^{+}.
		\end{align*}


	\end{exmp}	
\begin{exmp}[Bilateral-gamma distributions]
	 Let $X\sim BGD(\alpha^{+},\lambda^{+},\alpha^{-},\lambda^{-})$ with non-zero mean. That is, $X\sim IDD(\mu,0,\nu_{bg})$, where $\mu=\mathbb{E}(X)=\int_{\mathbb{R}}u\nu_{bg}(du)$ and the L\'evy measure $\nu_{bg}$ is defined in \eqref{bgdlevy}. Also, $Var(X)=\int_{\mathbb{R}}u^{2}\nu_{bg}(du)$. Then by \eqref{wpre04}, the $WPCP$ is given by
	\begin{align*}
		\mathcal{H}_{w}(X)&=\frac{\mathbb{E}\left( \int_{\mathbb{R}}w(X+u)u\nu_{bg}(du)\right)du  }{\mathbb{E}(w(X))}.
	\end{align*}
	\noindent
	For $w(x)=x$, the modified variance principle, we have
	\begin{align}
\nonumber	\mathcal{H}_{w}(X)&=\frac{\mathbb{E} \left(\displaystyle\int_{\mathbb{R}} (X+u)u\nu_{bg}(du) \right) }{\mathbb{E}(X)}\\
\nonumber	&=\displaystyle\int_{\mathbb{R}} u\nu_{bg}(du)+	\frac{\displaystyle\int_{\mathbb{R}} u^{2}\nu_{bg}(du)}{\displaystyle\int_{\mathbb{R}} u\nu_{bg}(du)}\\
	&= \left(\frac{\alpha^{+}}{\lambda^{+}}-  \frac{\alpha^{-}}{\lambda^{-}} \right)+\frac{\left(\frac{\alpha^+ }{(\lambda^{+})^{2}}+  \frac{\alpha^{-} }{(\lambda^{-})^{2}} \right)}{\left(\frac{\alpha^{+}}{\lambda^{+}}-  \frac{\alpha^{-}}{\lambda^{-}} \right)}.\label{wpcpbg0}
	\end{align}
	\noindent
	When $\alpha^+=\alpha^-=\alpha$, from \eqref{wpcpbg0}, we get
	\begin{align*}
		\mathcal{H}_{w}(X)&=\alpha\left(\frac{1}{\lambda^{+}}-  \frac{1}{\lambda^{-}} \right)+ \frac{\frac{1}{(\lambda^{+})^{2}}+  \frac{1 }{(\lambda^{-})^{2}} }{\frac{1}{\lambda^{+}}-  \frac{1}{\lambda^{-}} }\\
		&=\left(\lambda^+ \lambda^-  \right)^{-1} \left( \lambda^- -\lambda^+ \right)^{-1} \left( \alpha (\lambda^{-} - \lambda^{+})^{2}+ (\lambda^+)^2+(\lambda^-)^2  \right),
	\end{align*}
\noindent
which is the modified variance principle of $VGD_{0}(0,\alpha,\lambda^{+},\lambda^{-})$ distribution.

	\noindent
	Also, for $w(x)=e^{\kappa x},~\kappa >0$, the Esscher principle, we have
	\begin{align}
\nonumber	\mathcal{H}_w (X)&=\frac{\mathbb{E} \left(\displaystyle\int_{\mathbb{R}} e^{\kappa (X+u)}u\nu_{bg}(du) \right) }{\mathbb{E}(e^{\kappa X})}\\
\nonumber	&=\frac{\mathbb{E}(e^{\kappa X}) \left(\displaystyle\int_{\mathbb{R}}u e^{\kappa u}\nu_{bg}(du) \right) }{\mathbb{E}(e^{\kappa X})}\\
\nonumber	&=\alpha^{+} \displaystyle\int_{0}^{\infty}e^{-(\lambda^{+}-\kappa )u}du- \alpha^{-}\displaystyle\int_{0}^{\infty}e^{-(\lambda^{-}+\kappa )u}du   \\
	&=\frac{\alpha^{+}}{\lambda^{+}- \kappa}- \frac{\alpha^{-}}{\lambda^{-}+\kappa},~0<\kappa <\lambda^{+}.\label{wpcpbg1}
	\end{align}	
	\noindent
	Also, when $\alpha^+=\alpha^-=\alpha,$ from \eqref{wpcpbg1}, we get
		$$\mathcal{H}_w(X)=\alpha\left(\frac{1}{\lambda^{+}- \kappa}- \frac{1}{\lambda^{-}+\kappa}\right),~0<\kappa <\lambda^{+},$$
\noindent		
which is the Esscher principle of $VGD_{0}(0,\alpha,\lambda^{+},\lambda^{-})$ distribution.
	
\end{exmp}	
\begin{exmp}[Inverse Gaussian distribution] Let $X$ have an inverse Gaussian distribution with parameters $\alpha,\lambda>0$ with $pdf$

	$$f_{X}(x)=\frac{\alpha}{x^{\frac{3}{2}}}e^{2\alpha \sqrt{\pi \lambda} -\lambda x - \pi \alpha^{2}/ x},~x>0.$$
\noindent
That is, $X\sim IDD(\mu,0,\nu_{ig})$, where $\mu=\int_{\mathbb{R}}u\nu_{ig}(du)$ and the L\'evy measure $\nu_{ig}$ is given by $\nu_{ig}(du)=\alpha e^{-\lambda u}u^{-\frac{3}{2}}\mathbf{I}_{(0,\infty)}(u)du$ (see \cite[Section 4.4.2]{cont}). Then by \eqref{wpre04}, the $WPCP$ is given by

$$\mathcal{H}_{w}(X)=\frac{\alpha\mathbb{E} \int_{0}^{\infty}e^{-\lambda u}u^{-\frac{1}{2}} w(X+u)du}{\mathbb{E}(w(X))}.$$

\noindent
For $w(x)=x$, the modified variance principle, we have
\begin{align*}
	\mathcal{H}_{w}(X)=\frac{\mathbb{E} \left(\displaystyle\int_{\mathbb{R}} (X+u)u\nu_{ig}(du) \right) }{\mathbb{E}(X)}=\displaystyle\int_{\mathbb{R}} u\nu_{ig}(du)+	\frac{\displaystyle\int_{\mathbb{R}} u^{2}\nu_{ig}(du)}{\displaystyle\int_{\mathbb{R}} u\nu_{ig}(du)}=\frac{1+2\alpha \sqrt{\pi}}{2 \lambda}.
\end{align*}
\noindent
 Also, for $w(x)=e^{\kappa x},~\kappa >0$, the Esscher principle, we have
 $$\mathcal{H}_{w}(X)=\frac{\alpha \sqrt{\pi}}{(\lambda -\kappa)^{\frac{3}{2}}},~0<\kappa<\lambda.$$
 	
\end{exmp}

	\subsubsection{Generalized $WPCP$} Here, we apply Theorem \ref{qtheorem1} to generalized $WPCP$. We first recall the following definition; see \cite[Section 4]{furman} for more details.
	\begin{defn}
		Let $X$ be a rv of risk and $w:[0,\infty)\to [0,\infty)$ be an increasing function. Then for an increasing non-negative function $g$, the generalized $WPCP$ is defined as
		$$\mathcal{H}_{g,w}(X)=\frac{\mathbb{E}(g(X)w(X))}{\mathbb{E}(w(X))}.$$ 
	\end{defn}
	
	\noindent
	In particular, using \eqref{qeqn1} with $g(x)=x^{n},~n\geq1$, we have the following result.
	
	\begin{pro}
		Let $X\sim IDD(\mu,0,\nu)$ with $\mu=\int_{\mathbb{R}}u\nu(du)<\infty$ and $w:[0,\infty)\to [0,\infty)$ be a function such that $0<\mathbb{E}(w(X))<\infty$. Suppose that for $x\in[0,\infty)$, $g(x)=x^{n}$, $n\geq1$ . Then 
		\begin{align}\label{gwpcp}
		\mathcal{H}_{x^{n},w}(X)=\mathbb{E}(X^{n})+\frac{\displaystyle\sum_{k=0}^{n-1}\binom{n}{k}\displaystyle\int_{0}^{1}\mathbb{E}\left(Y_{s}^{k}\displaystyle\int_{\mathbb{R}}w^{\prime}(X_s+v)\eta_{n-k}(v)dv\right) ds}{\mathbb{E}(w(X))},
		\end{align}	
		where the random vector $(X_s,Y_s)$ have the cf given in \eqref{jointcf} and $\eta_{k}$'s are defined in \eqref{qnnot0}.	
	\end{pro}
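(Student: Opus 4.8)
The plan is to recognize that the numerator $\mathbb{E}(X^{n}w(X))$ appearing in the generalized $WPCP$ differs from the covariance $\text{Cov}(X^{n},w(X))$ only by the product term $\mathbb{E}(X^{n})\mathbb{E}(w(X))$, and then to invoke Theorem \ref{qtheorem1} with the absolutely continuous weight $w$ playing the role of $g$. First I would start from the definition of the generalized $WPCP$ with $g(x)=x^{n}$, namely
\[
\mathcal{H}_{x^{n},w}(X)=\frac{\mathbb{E}(X^{n}w(X))}{\mathbb{E}(w(X))},
\]
and use the elementary decomposition $\mathbb{E}(X^{n}w(X))=\text{Cov}(X^{n},w(X))+\mathbb{E}(X^{n})\mathbb{E}(w(X))$, which is valid since $0<\mathbb{E}(w(X))<\infty$ and $\mathbb{E}(X^{n})<\infty$. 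Dividing through by $\mathbb{E}(w(X))$ gives the natural $n$-th order analogue of \eqref{wpre0}:
\[
\mathcal{H}_{x^{n},w}(X)=\mathbb{E}(X^{n})+\frac{\text{Cov}(X^{n},w(X))}{\mathbb{E}(w(X))}.
\]

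The second step is to apply the master covariance identity \eqref{qeqn1} of Theorem \ref{qtheorem1} to the function $w$, which yields
\[
\text{Cov}(X^{n},w(X))=\sum_{k=0}^{n-1}\binom{n}{k}\int_{0}^{1}\mathbb{E}\left(Y_{s}^{k}\int_{\mathbb{R}}w^{\prime}(X_{s}+v)\eta_{n-k}(v)\,dv\right)ds.
\]
Substituting this expression for the covariance into the displayed formula above produces \eqref{gwpcp} directly, so no additional manipulation is needed; the result is essentially a reformulation of Theorem \ref{qtheorem1} in the language of premium principles.

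The only point requiring care is the verification that Theorem \ref{qtheorem1} genuinely applies with $g$ replaced by $w$. Concretely, one must read the standing assumptions on the weight $w$ (increasing, with $0<\mathbb{E}(w(X))<\infty$) as implicitly including that $w$ is absolutely continuous and that the integrability condition $\mathbb{E}\left(Y_{s}^{k}\int_{\mathbb{R}}w^{\prime}(X_{s}+v)\eta_{n-k}(v)\,dv\right)<\infty$ holds for each $0\leq k\leq n-1$, exactly as hypothesized in Theorem \ref{qtheorem1}. I expect this bookkeeping of hypotheses to be the only obstacle: there is no new analytic estimate to establish, and the entire proof reduces to the covariance-to-premium conversion together with a single citation of the already-proved identity \eqref{qeqn1}.
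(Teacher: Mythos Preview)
Your proposal is correct and matches the paper's own argument: the paper simply remarks that the proposition follows by inserting the identity \eqref{qeqn1} into the definition of $\mathcal{H}_{x^{n},w}(X)$ via the decomposition $\mathbb{E}(X^{n}w(X))=\mathbb{E}(X^{n})\mathbb{E}(w(X))+\text{Cov}(X^{n},w(X))$, which is exactly what you do. Your added remark about the implicit regularity and integrability hypotheses on $w$ is a fair observation, but no further work is required.
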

	\noindent
	Next, we discuss an example of generalized $WPCP$ using the above formula.
	
	\begin{exmp}(Gamma distribution)
		Let $X\sim Ga(a,b)$ ($a,b>0$) with $pdf$
		\begin{align}
		f_{X}(x)=\frac{b^{a}}{\Gamma(a)}x^{a-1}e^{-bx},~x>0.
		\end{align}
		\noindent
		Let $\Gamma(s,x)=\int_{s}^{\infty}e^{-t}t^{x-1}dt$ ($s,x>0$) be the incomplete gamma function. Let $w:[0,\infty)\to [0,\infty)$ be a function such that $0<\mathbb{E}(w(X))<\infty$. Suppose also that for $x\in[0,\infty)$, $g(x)=x^{n}$, $n\geq1$. Then by \eqref{gwpcp}, the generalized $WPCP$ is given by
		\begin{align}\label{gwpcp00}
		\mathcal{H}_{x^{n},w}(X)=\frac{\Gamma (a+n)}{\Gamma(a)b^{n}}+ \frac{\displaystyle\sum_{k=0}^{n-1}\binom{n}{k}\displaystyle\int_{0}^{1}\mathbb{E}\left(Y_{s}^{k}\displaystyle\int_{0}^{\infty}w^{\prime}(X_s+v)\eta_{n-k}(v)dv\right) ds}{\mathbb{E}(w(X))},
		\end{align}
		
		\noindent
		where $\eta_{k}$'s are defined by, for any $y>0$,
		$$\eta_{k}(y)= \frac{a}{b^k}\Gamma (by,k),~k\geq 1.$$
	\noindent
	Note also that $(X_s,Y_s)$ has the cf
	\begin{align}
		\phi_{s}(t,z)=\bigg(1-\frac{it}{b}\bigg)^{-(1-s)a}\bigg(1-\frac{iz}{b}\bigg)^{-(1-s)a}\bigg(1-\frac{i(t+z)}{b}\bigg)^{-sa},~t,z\in \mathbb{R},
	\end{align}
\noindent
since $\phi_{X}(t)=\bigg(1-\frac{it}{b}\bigg)^{-a}$.	
		
\noindent
Also, by Corollary \ref{pcid}, the formula in \eqref{gwpcp00} can be seen as
\begin{align}\label{gwpcp0}	
	\mathcal{H}_{x^{n},w}(X)=\frac{\Gamma (a+n)}{\Gamma(a)b^{n}}+ \frac{(n!)}{b^{n+1}} \frac{\displaystyle\sum_{k=0}^{n-1}\frac{ab^k}{k!}\displaystyle\int_{0}^{1} \mathbb{E}\left(Y_s^{k}w^{\prime}(X_s+Y_{n-k})\right)ds }{\mathbb{E}(w(X))},	
	\end{align}
	\noindent
	where the rv $Y_k$ has the density, for any $y>0$,
	$$f_k(y)=\frac{b}{\Gamma(k+1)}\Gamma(by,k),~k\geq 1.$$
\noindent
For example, when $n=1$ and $w(x)=e^{\kappa x},~\kappa >0$, from \eqref{gwpcp0}, we get
\begin{align}\label{gwp000}
	\mathcal{H}_{x,w}(X)&=\frac{a}{b}+\frac{\kappa a}{b^2}\frac{\int_{0}^{1}\mathbb{E}\left(e^{\kappa(X_s+Y_{1})}\right)ds }{\mathbb{E} (e^X)},
\end{align}	
\noindent
where $Y_1$ has density
$$f_1(y)=be^{-by},~y>0.$$ Since $X_s \overset{d}{=}X,$ then \eqref{gwp000} reduces to

	\begin{align}
\nonumber	\mathcal{H}_{x,w}(X)&=\frac{a}{b}+\frac{\kappa a}{b^2}\frac{\mathbb{E}\left(e^{\kappa (X+Y_{1})}\right) }{\mathbb{E} (e^{\kappa X})}\\
\nonumber&=\frac{a}{b}+\frac{\kappa a}{b^2}\mathbb{E} (e^{\kappa Y_1})\\
\nonumber	&=\frac{a}{b}+\frac{\kappa a}{b}\int_{0}^{\infty}e^{-(b-\kappa)y}dy\\
\nonumber	&=\frac{a}{b}+\frac{\kappa a}{b (b-\kappa)}\\
	\label{wpcpgama}&=\frac{a}{b- \kappa},~0<\kappa <b,
	\end{align}
	\noindent
	which is the Esscher principle of gamma $Ga(a,b)$ distribution.
	
	\vskip 1ex
	\noindent
	Observe that, for $a=b=1$, the $WPCP$ in \eqref{wpcpgama} matches exactly with the Esscher principle of $Ga (1,1)$ distribution (exponential distribution with mean $1$)  (see \cite[Example 3.1]{dickson}), where the author uses density function to derive the premium calculation principle. 	
\end{exmp}

	\subsection{Application to Gini coefficient}
	Here, we obtain an alternate formula of the Gini coefficient for $IDD$, in terms of L\'evy measure via Stein-type covariance identity. Recall that the Gini coefficient is defined as follows (see \cite{Tz}). 
	\begin{defn}
		Let $X$ have the distribution $F_X(x)$. The Gini coefficient of the distribution function $F_X(x)$ is defined as
		\begin{align}\label{Gini1}
		G=\frac{2}{\mathbb{E}(X)}\text{Cov}(X,F_X(X)).
		\end{align}
	\end{defn}

	\begin{pro}
	Let $X\sim \text{IDD}(\mu,0,\nu)$ and $F_{X}(x)$ be the distribution of $X$. Then for $\mu>0$,
	\begin{align}\label{Gini4}
	G=\frac{2}{\mu}\mathbb{E}\int_{\mathbb{R}} u(F_X(X+u)-F_X(X))\nu(du).
	\end{align}	
	\end{pro}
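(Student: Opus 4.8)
The plan is to read off the Gini coefficient directly from the covariance identity of Corollary~\ref{theorem1}, in exactly the same way the weighted premium formula \eqref{wpre04} was obtained. Since $X\sim\text{IDD}(\mu,0,\nu)$ has $\mathbb{E}(X)=C_1(X)=\mu>0$, the prefactor in the definition \eqref{Gini1} is $2/\mu$, so the whole problem reduces to evaluating $\text{Cov}(X,F_X(X))$.

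First I would take $g=F_X$ in the covariance representation already derived inside the proof of the $WPCP$ proposition, namely \eqref{wpre1},
\begin{align*}
\text{Cov}(X,g(X))=\mathbb{E}\int_{\mathbb{R}}\bigl(g(X+u)-g(X)\bigr)u\,\nu(du).
\end{align*}
Recall that this identity itself comes from Corollary~\ref{theorem1} by writing $Var(X)\,\mathbb{E}(g^{\prime}(X+Y_1))=\mathbb{E}\int_{\mathbb{R}}g^{\prime}(X+v)\eta_{1}(v)\,dv$, splitting $\eta_1$ into its positive and negative parts through \eqref{qnnot0}, and applying Fubini's theorem to interchange the $v$- and $u$-integrations. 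Substituting $g=F_X$ then yields
\begin{align*}
\text{Cov}(X,F_X(X))=\mathbb{E}\int_{\mathbb{R}}\bigl(F_X(X+u)-F_X(X)\bigr)u\,\nu(du),
\end{align*}
and multiplying by $2/\mu$ gives precisely \eqref{Gini4}.

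The one place that requires genuine care, and which I expect to be the main obstacle, is verifying that $F_X$ is an admissible test function for Corollary~\ref{theorem1}. For the distributions under consideration $X$ has an absolutely continuous law, so $F_X$ is absolutely continuous with $F_X^{\prime}=f_X$, and $F_X$ is bounded by $1$; hence $\text{Cov}(X,F_X(X))$ is finite as soon as $\mathbb{E}|X|<\infty$, which holds since $\mu<\infty$. The hypothesis $\mathbb{E}(F_X^{\prime}(X+Y_1))=\mathbb{E}(f_X(X+Y_1))<\infty$ must still be checked; it follows at once once $f_X$ is bounded by some $M$, since by independence of $X$ and $Y_1$ one has $\mathbb{E}(f_X(X+Y_1))=\int f_1(y)\int f_X(x)f_X(x+y)\,dx\,dy\leq M$, and local boundedness of $f_X$ holds in the $VGD$ and $CGMY$ instances to which \eqref{Gini4} is applied. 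Finally, the boundedness of $F_X$ is exactly what legitimises the Fubini interchange carried out in passing from the $\eta_1$-representation to the $u$-representation, so no integrability beyond $\mu<\infty$ is needed for the displayed manipulations.
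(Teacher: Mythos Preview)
Your argument is correct and matches the paper's proof essentially line for line: the paper simply records the identity $\text{Cov}(X,g(X))=\mathbb{E}\int_{\mathbb{R}} u(g(X+u)-g(X))\,\nu(du)$ (obtained from Corollary~\ref{theorem1} exactly as in \eqref{wpre1}), replaces $g$ by $F_X$, and substitutes into \eqref{Gini1}. Your additional discussion of the admissibility of $F_X$ as a test function is extra care that the paper itself omits.
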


	\begin{proof}
	\noindent
	From Corollary \ref{theorem1}, it can be shown that	
	\begin{align}\label{ginicov1}
	\text{Cov}(X,g(X))&=\mathbb{E}\int_{\mathbb{R}} u(g(X+u)-g(X))\nu(du).
	\end{align}
	
	\noindent
	Replacing $g$ by $F_X$ in \eqref{ginicov1} and substituting in \eqref{Gini1}, the desired conclusion follows.
	\end{proof}
	
	\noindent
	Next, we discuss some examples.
	
	\begin{exmp}[$CGMY$ distribution] Let $X\sim CGMY(\alpha, \beta,\lambda^{+},\lambda^{-})$. Then the mean is given by
		$$\mathbb{E}(X)=\mu=\Gamma(1-\beta) \frac{\alpha}{(\lambda^{+})^{1-\beta}}-\Gamma(1-\beta) \frac{\alpha}{(\lambda^{-})^{1-\beta}}.$$
		
		\noindent
		Then using \eqref{Gini4}, the Gini coefficient is given by
		
		$$G_{\text{CGMY}}=\frac{2}{\mu}\left(\Gamma(2-\beta) \frac{\alpha}{(\lambda^{+})^{2-\beta}}+\Gamma(2-\beta) \frac{\alpha}{(\lambda^{-})^{2-\beta}}\right).$$
	\end{exmp}
	
	\begin{exmp}[Bilateral gamma distribution] Let $X\sim BGD(\alpha^{+},\lambda^{+},\alpha^{-},\lambda^{-})$. Then the mean is given by
		
		$$\mathbb{E}(X)=\mu=\frac{\alpha^{+}}{\lambda^{+}}-\frac{\alpha^{+}}{\lambda^{+}}.$$
		
		\noindent
		Then using \eqref{Gini4}, the Gini coefficient is given by
		
		$$G_{\text{BGD}}=\frac{2}{\mu}\left( \frac{\alpha^{+}}{(\lambda^{+})^{2}}+\frac{\alpha^{-}}{(\lambda^{-})^{2}}\right).$$
	\end{exmp}

	\begin{exmp}[Variance gamma distribution] Let $X\sim VGD_0(\mu_{0},\alpha,\lambda^{+},\lambda^{-})$. Then the mean is given by
		
		$$\mathbb{E}(X)=\mu=\mu_{0}+\frac{\alpha}{\lambda^{+}}-\frac{\alpha}{\lambda^{+}}.$$
		
		\noindent
		Then using \eqref{Gini4}, the Gini coefficient is given by
		$$G_{\text{VGD}}=\frac{2 \alpha}{\mu}\left( \frac{1}{(\lambda^{+})^{2}}+\frac{1}{(\lambda^{-})^{2}}\right).$$
	\end{exmp}

	\noindent 
	\textbf{Acknowledgment}: The authors are thankful to the reviewer for the several helpful comments and suggestions, and also pointing out an error in our earlier version.

	\setstretch{.8}


\end{document}